\newtheorem{theorem}{Theorem}[section]
\newtheorem{lemma}[theorem]{Lemma}
\newtheorem{cor}[theorem]{Corollary}
\theoremstyle{definition}
\theoremstyle{remark}
\newtheorem{remark}[theorem]{Remark}
\numberwithin{equation}{section}
\newcommand\on{\operatorname}
\newcommand\Ric{\on{Ric}}
\begin{document}

\title{Gradient solitons on doubly warped product manifolds}

\author{Adara M. Blaga and Hakan M. Ta\c{s}tan}

\maketitle


\begin{abstract}
   Firstly we provide new characterizations for doubly warped prod\-uct manifolds. Then we consider several types of gradient solitons on them such as Riemann, Ricci, Yamabe and conformal and examine the effect of a gradient soliton on a doubly warped product to its factor manifolds. Finally we in\-ves\-ti\-gate the concircularly flat and conharmonically flat cases of doubly warped products.
\end{abstract}

\noindent {\bf Keywords:} {doubly warped product manifold, Riemann soliton, Ricci soliton, Yamabe soliton, quasi-Einstein manifold.}

\noindent {\bf MSC 2020:} {53C20, 53C25.}


\section{Introduction}

Doubly warped products \cite{Er} are natural generalizations of warped products introduced by Bishop and O'Neill \cite{Bi}, which play an important role in differential geometry, as well as in physics, especially in the theory of relativity.

On the other hand, the interest in studying different geometric properties of Riemann \cite{hi}, Ricci \cite{ha} and Yamabe solitons \cite{de} has lately considerably increased. Self-similar solutions to Riemann, Ricci and Yamabe flow, they generalize spaces of constant sectional curvature and Einstein manifolds.

In the present paper, after deducing new properties of a doubly warped product manifold, we study the effect of a gradient Riemann, gradient Ricci and gradient Yamabe soliton on a doubly warped product manifold to the factor manifolds. Moreover, we prove that if the doubly warped product manifold is concircularly flat, then its factors are Einstein manifolds \cite{Al}, and if it is conharmonically flat, then its factors are gradient $f$-almost Ricci solitons \cite{gom}.

\section{Preliminaries}

\subsection{Doubly warped product manifolds}

Let $(M_1,g_1)$ and $(M_2,g_2)$ be two Riemannian manifolds, and let $f_1$ and $f_2$ be two positive smooth functions defined on $M_1$ and $M_2$, respectively. Let $\pi_1$ and $\pi_2$ be the canonical projections from $M_1 \times M_2$ onto $M_1$ and $M_2$, respectively. Then the \emph{doubly warped product manifold} \cite{Er} $_{{f}_{2}} M_1 \!\times_{{f}_{1}}\!M_2$ is the product manifold $M_1 \times M_2$ equipped with metric $g$ defined by
\begin{equation}
\label{e0}
g=(f_2 \circ \pi_2)^2 \pi_{1}^{*}(g_1)+(f_1 \circ \pi_1)^2 \pi_{2}^{*}(g_2),
\end{equation}
where $\pi_{i}^{*}(g_i)$ is the pullback of $g_i$ via $\pi_{i}$, for $i\in\{1,2\}.$ Each function $f_i$ is called a \emph{warping function} of the doubly warped product $(_{{f}_{2}}\!M_1\!\times_{{f}_{1}}\!M_2, g)$. If one of the functions $f_1$ and $f_2$ is constant, then we get a \emph{warped product manifold} \cite{Bi}.
If both $f_1$ and $f_2$ are constant, then we obtain a \emph{direct product manifold} \cite{Chen}. A doubly warped product manifold is said to be \emph{non-trivial} if neither $f_1$ nor $f_2$ is constant.\\

Let $(_{{f}_{2}}\!M_1\!\times_{{f}_{1}}\!M_2,g)$ be a doubly warped product manifold with the Levi-Civita connection $\nabla$
and denote by $\nabla^{i}$ the Levi-Civita connection of $g_{i}$, for $i\in\{1,2\}.$ By usual convenience, we denote by $\mathfrak{L}(M_{i})$ the set of lifts of vector fields on $M_{i}$ and we will use the same notation for a vector field and for its lift. Also, we shall use the same notation for a metric and for its pullback. On the other hand, each $\pi_{i}$ is a (positive) homothety, so it preserves the Levi-Civita connection. Thus, there is no confusion using the same notation for a connection on $M_{i}$ and for its pullback via $\pi_{i}.$ Then, the covariant derivative formulas \cite{Er} for a doubly warped product manifold are given by:
\begin{align}
\nabla_{X}Y&=\nabla^{1}_{X}Y-g(X,Y)\nabla(\ln (f_{2}\circ \pi_2)),\label{e1}\\
\nabla_{X}V&=\nabla_{V}X=V(\ln (f_{2}\circ \pi_2))X+X(\ln (f_{1}\circ \pi_1))V,\label{e2}\\
\nabla_{U}V&=\nabla^{2}_{U}V-g(U,V)\nabla(\ln (f_{1}\circ\pi_1))\label{e3},
\end{align}
for $X, Y\in\mathfrak{L}(M_{1})$ and $U, V\in\mathfrak{L}(M_{2})$. It follows that $M_{1}\times\{p_{2}\}$ and $\{p_{1}\}\times M_{2}$
are totally umbilical submanifolds with closed mean curvature vector fields in $(_{{f}_{2}}\!M_1\!\times_{{f}_{1}}\!M_2,g)$ \cite{Ol},
where $p_{1}\in M_{1}$ and $p_{2}\in M_{2}.$ For more details on doubly warped products, we refer to the papers \cite{Er, GeTa, Ol, Un}.

\begin{remark}
From now on, we will put $k=\ln f_{1}$ (resp. $ l=\ln f_{2}$) and use the same symbol for the function $k$ (resp. $ l$) and its pullback
$k\circ\pi_{1}$ (resp. $l\circ\pi_{2}$).
\end{remark}

Now, let $\psi$ be a smooth function on a doubly warped product $(_{{f}_{2}}\!M_1\!\times_{{f}_{1}}\!M_2,g)$.
Then, for any $X\in\mathfrak{L}(M_{1})$ and $U\in\mathfrak{L}(M_{2})$, by the definition of the Hessian tensor and by using \eqref{e1} and \eqref{e2}, we have
\begin{equation}
\label{a4}
\begin{array}{c}
h^\psi(X,U)=X(k)U(\psi)-X(\psi)U(l).
\end{array}
\end{equation}
Next, we define $h^{\psi}_1(X,Y)=XY(\psi)-(\nabla^{1}_{X}Y)(\psi)$, for all $X,Y\in\mathfrak{L}(M_{1})$ and
$h^{\psi}_2(U,V)=UV(\psi)-(\nabla^{2}_{U}V)(\psi)$, for all $U,V\in\mathfrak{L}(M_{2}).$ By using \eqref{e1} and \eqref{e3},
the Hessian tensor $h^\psi$ of $\psi$ satisfies
\begin{equation}
\label{e4}
\begin{array}{c}
h^\psi(X,Y)=h^{\psi}_1(X,Y)+g(X,Y)g(\nabla l,\nabla\psi)
\end{array}
\end{equation}
and
\begin{equation}
\label{e5}
\begin{array}{c}
h^\psi(U,V)=h^{\psi}_2(U,V)+g(U,V)g(\nabla k,\nabla \psi).
\end{array}
\end{equation}
Since $\nabla k\in\mathfrak{L}(M_{1})$ and $\nabla l\in\mathfrak{L}(M_{2})$, from \eqref{e4} and \eqref{e5} we deduce that
\begin{align}
h^k(X,Y)&=h^k_1(X,Y),\label{e04}\\
h^l(X,Y)&=g(X,Y)g(\nabla l,\nabla l),\label{0e4}\\
h^k(U,V)&=g(U,V)g(\nabla k,\nabla k),\label{e05}\\
h^l(U,V)&=h^l_2(U,V).\label{0e5}
\end{align}

Let $^{1}\!R$ and $^{2}\!R$ be the lifts of Riemann curvature tensors of $(M_1,g_1)$ and $(M_2,g_2)$, respectively and let $R$ be the Riemann curvature tensor of
the doubly warped product $( _{{f}_{2}}\!M_1\!\times_{{f}_{1}}\!M_2,g).$ Then, by a direct computation and by using \eqref{e1}--\eqref{e3}, we have the following relations.
\begin{lemma} \label{L1}
Let $X,Y, Z\in\mathfrak{L}(M_{1})$ and $U,V,W\in\mathfrak{L}(M_{2})$. Then, we have
\begin{align}
R_{XY}Z&=^{1}\!R_{XY} Z+g(X,Z)H^l(Y)-g(Y,Z)H^l(X),\label{e6}\\
R_{XY}U&=U(l)\bigg(Y(k)X-X(k)Y\bigg),\label{e7}\\
R_{UV}X&=X(k)\bigg(V(l)U-U(l)V\bigg),\label{e8}\\
R_{XU}Y&=\bigg(h_1^k(X,Y)+X(k)Y(k)\bigg)U+Y(k)U(l)X\nonumber\\
\quad&\quad+g(X,Y)\bigg(H^l(U)+U(l)\nabla l\bigg),\label{e9}
\end{align}
\begin{align}
R_{UX}V&=\bigg(h_2^l(U,V)+U(l)V(l)\bigg)X+V(l)X(k)U\nonumber\\
\quad&\quad+g(U,V)\bigg(H^k(X)+X(k)\nabla k\bigg),\label{e10}\\
R_{UV}W&=^{2}\!R_{UV}W+g(U,W)H^k(V)-g(V,W)H^k(U),\label{e11}
\end{align}
where $H^k$ is the Hessian tensor of $k$ on $( _{{f}_{2}}\!M_1\!\times_{{f}_{1}}\!M_2,g)$, i.e., $H^k(E)=\nabla_E \nabla k$, for any vector field $E$ on $_{{f}_{2}}\!M_1 \!\times_{{f}_{1}}\!M_2.$
\end{lemma}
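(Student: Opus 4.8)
The plan is to establish the six curvature identities by a direct but organized computation, using only the covariant derivative formulas \eqref{e1}--\eqref{e3} together with the convention that $\nabla k \in \mathfrak{L}(M_1)$ and $\nabla l \in \mathfrak{L}(M_2)$. Recall $R_{EF}G = \nabla_E\nabla_F G - \nabla_F\nabla_E G - \nabla_{[E,F]}G$. Since lifts of vector fields from different factors commute (their brackets vanish) and lifts from the same factor have brackets that are again lifts of the corresponding bracket, the third term is always harmless. So the work is to expand $\nabla_E\nabla_F G$ for each of the mixed types of triples.

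First I would do the "pure" cases \eqref{e6} and \eqref{e11}, which are symmetric to each other. For $X,Y,Z\in\mathfrak{L}(M_1)$, apply \eqref{e1} twice: $\nabla_X(\nabla_Y Z) = \nabla_X\bigl(\nabla^1_Y Z - g(Y,Z)\nabla l\bigr)$. Expanding, the $M_1$-part reproduces $^1R_{XY}Z$, and the cross terms involving $\nabla l$ (using \eqref{e2} to differentiate $\nabla l$ in the $X$-direction and noting $X(g(Y,Z))$ telescopes against the $\nabla^1$ term) collect into $g(X,Z)H^l(Y) - g(Y,Z)H^l(X)$ after antisymmetrizing in $X,Y$ and using the fact that $H^l$ is the Hessian (so $g(\nabla_X\nabla l, Z)$ is symmetric and the non-symmetric pieces cancel in the curvature). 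The identity \eqref{e11} follows by the obvious $1\leftrightarrow 2$, $k\leftrightarrow l$, $X\leftrightarrow U$ symmetry of the whole setup.

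Next I would treat the simplest mixed cases \eqref{e7} and \eqref{e8}, which should fall out almost immediately: for $X,Y\in\mathfrak{L}(M_1)$ and $U\in\mathfrak{L}(M_2)$, compute $\nabla_X\nabla_Y U$ using \eqref{e2} twice — each application produces terms of the form (function)$\cdot X$-type $+$ (function)$\cdot U$-type — and the $U$-type contributions, being symmetric in $X,Y$, cancel when we antisymmetrize, leaving only the $U(l)(Y(k)X - X(k)Y)$ piece. Here one uses that $X(k), Y(k)$ are functions pulled back from $M_1$ hence annihilated by $U$, and vice versa. Finally, the two remaining identities \eqref{e9} and \eqref{e10} are the genuinely laborious ones and I expect them to be the main obstacle: they mix all three types of terms, one must carefully apply \eqref{e1}, \eqref{e2}, \eqref{e3} in the right combinations, keep track of which directional derivatives kill which functions, and recognize the Hessian pieces $h_1^k(X,Y)$ (via \eqref{e04}) and $h_2^l(U,V)$ (via \eqref{0e5}) emerging from terms like $X(Y(k))$ minus connection corrections. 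The bookkeeping of the $\nabla l$ versus $\nabla k$ contributions and the homothety-invariance of the sub-connections is where errors are easy to make; organizing the computation by collecting coefficients of $X$, of $U$, and of the "gradient" vectors $\nabla k, \nabla l$ separately is the way to keep it under control.
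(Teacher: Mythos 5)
Your proposal follows exactly the route the paper itself takes: Lemma \ref{L1} is justified there only as "a direct computation using \eqref{e1}--\eqref{e3}" applied to $R_{EF}G=\nabla_E\nabla_F G-\nabla_F\nabla_E G-\nabla_{[E,F]}G$, with vanishing mixed brackets and $\nabla k\in\mathfrak{L}(M_1)$, $\nabla l\in\mathfrak{L}(M_2)$, and your outlined expansions (including the cancellation of the $\nabla l$-proportional terms in \eqref{e6} and the emergence of $h_1^k$, $h_2^l$ in \eqref{e9}--\eqref{e10}) do check out when carried through. So the approach is correct and essentially identical to the paper's.
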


Let $^{1}\!\Ric$ and $^{2}\!\Ric$ be the lifts of Ricci curvature tensors of $(M_1,g_1)$ and $(M_2,g_2)$, respectively and let $\Ric$ be the Ricci curvature tensor of
the doubly warped product $( _{{f}_{2}}\!M_1\!\times_{{f}_{1}}\!M_2,g).$ Then, by a direct computation and by using \eqref{e6}--\eqref{e11} and \eqref{0e4}--\eqref{e05}, we have the following relations.
\begin{lemma} \label{L2}
Let $X,Y\in\mathfrak{L}(M_{1})$ and $U,V\in\mathfrak{L}(M_{2})$. Then, we have
\begin{align}
\Ric(X,Y)&=\,^{1}\!\Ric(X,Y)-\frac{m_2}{f_1}h_1^{f_1}(X,Y)-g(X,Y)\Delta l,\label{e12}\\
\Ric(X,U)&=(m_1+m_2-2)X(k)U(l),\label{e13}\\
\Ric(U,V)&=\,^{2}\!\Ric(U,V)-\frac{m_1}{f_2}h_2^{f_2}(U,V)-g(U,V)\Delta k,\label{e14}
\end{align}
where $\Delta$ is the Laplacian operator on $(_{{f}_{2}}\!M_1\!\times_{{f}_{1}}\!M_2,g)$ and $m_{i}=\dim(M_{i})$, for $i\in\{1,2\}$.
\end{lemma}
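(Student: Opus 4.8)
The plan is to compute the Ricci tensor of the doubly warped product by contracting the curvature relations from Lemma~\ref{L1}. Recall that $\Ric(E,F) = \sum_a g(R_{e_a E}F, e_a)$ for any local orthonormal frame $\{e_a\}$; here I would choose a frame adapted to the product, taking $\{X_i\}_{i=1}^{m_1}$ an orthonormal frame of vector fields lifted from $M_1$ and $\{U_\alpha\}_{\alpha=1}^{m_2}$ one lifted from $M_2$, so that $\{X_i\} \cup \{U_\alpha\}$ is orthonormal for $g$. The contraction then splits into a sum over the $M_1$-directions and a sum over the $M_2$-directions, and each of the three cases $\Ric(X,Y)$, $\Ric(X,U)$, $\Ric(U,V)$ uses a different pair of the curvature formulas \eqref{e6}--\eqref{e11}.

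For $\Ric(X,Y)$ with $X,Y \in \mathfrak{L}(M_1)$: the $M_1$-part of the trace comes from \eqref{e6}, contributing $^{1}\!\Ric(X,Y)$ plus trace terms involving $H^l$ which, because $H^l$ applied to an $M_1$-vector and paired with an $M_1$-vector reduces (via \eqref{0e4}) to a multiple of the metric, contributes a term proportional to $g(X,Y)\,g(\nabla l,\nabla l)$ with coefficient tied to $m_1$. The $M_2$-part of the trace comes from \eqref{e9} (with the roles set so that one traces over $U_\alpha$): the piece $h_1^k(X,Y)+X(k)Y(k)$ is summed $m_2$ times, and then one rewrites $h_1^k + dk \otimes dk$ in terms of $\frac{1}{f_1}h_1^{f_1}$ since $f_1 = e^k$ gives $h_1^{f_1} = f_1(h_1^k + dk\otimes dk)$; the remaining $g(X,Y)\bigl(H^l(U)+U(l)\nabla l\bigr)$ term, when paired with $U_\alpha$ and summed, produces $g(X,Y)$ times $\bigl(\operatorname{tr}_{M_2} H^l + |\nabla l|^2\bigr)$, which is exactly $g(X,Y)\Delta l$ up to bookkeeping with \eqref{0e4}. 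Collecting the $g(\nabla l,\nabla l)$ contributions from both halves should combine into the single $-g(X,Y)\Delta l$ term stated in \eqref{e12}. The case \eqref{e14} is entirely symmetric, interchanging $1\leftrightarrow 2$, $k\leftrightarrow l$, $X\leftrightarrow U$. For the mixed term \eqref{e13}, I would trace $\Ric(X,U)$: the $M_1$-directions contribute via \eqref{e7} (or \eqref{e10} depending on index placement) a sum of $m_1$ terms, each up to the $X(k)X_i(k)$-type structure collapsing against orthonormality, and the $M_2$-directions contribute via \eqref{e8}/\eqref{e9} another $m_2$-fold sum, and the cross terms of the form $X(k)U(l)$ should add up with the combinatorial factor $(m_1 + m_2 - 2)$ — the $-2$ arising because the "diagonal" directions $X_i = X/|X|$ and $U_\alpha = U/|U|$ contribute degenerately and must be excised or cancel.

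The main obstacle I expect is the careful bookkeeping of which curvature identity feeds which part of the trace, and in particular tracking the several places where $g(\nabla l,\nabla l)$ (respectively $g(\nabla k,\nabla k)$) and the Hessian-of-$f_i$ terms appear, so that they recombine precisely into $\Delta l$ (respectively $\Delta k$) and $\frac{m_2}{f_1}h_1^{f_1}$ (respectively $\frac{m_1}{f_2}h_2^{f_2}$) rather than leaving stray residual terms. The passage from $h_1^k$, $X(k)Y(k)$ to $\frac{1}{f_1}h_1^{f_1}$ via $f_1 = e^k$ is the one genuinely non-mechanical identity, and the $-2$ in \eqref{e13} requires being honest about the degenerate frame directions; everything else is a direct, if lengthy, contraction of \eqref{e6}--\eqref{e11} using \eqref{0e4}--\eqref{e05} exactly as the statement of the lemma advertises.
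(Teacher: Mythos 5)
Your plan — tracing the curvature formulas of Lemma \ref{L1} over a frame adapted to the product, using \eqref{a4}, \eqref{e04}--\eqref{0e5} and the identity $h_1^{f_1}=f_1\,(h_1^k+dk\otimes dk)$ to recombine the $g(\nabla l,\nabla l)$ and Hessian terms into $-g(X,Y)\Delta l$ and $-\frac{m_2}{f_1}h_1^{f_1}$ — is correct and is exactly the ``direct computation'' the paper intends, so no essentially different route is taken. The only bookkeeping corrections: the $g$-orthonormal frame $\{e_i,\omega_\alpha\}$ is the one of Remark \ref{re1} (so $\{f_2e_i\}$, $\{f_1\omega_\alpha\}$ are orthonormal for $g_1$, $g_2$, which is what makes the ${}^{1}\!\Ric$ term come out cleanly), and in the mixed case \eqref{e13} the two partial traces are governed by \eqref{e7} and \eqref{e10} respectively, each contributing $(m_i-1)X(k)U(l)$, which is the honest source of the $-2$.
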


Let $^{1}\!Q$ and $^{2}\!Q$ be the lifts of Ricci operators of $(M_1,g_1)$ and $(M_2,g_2)$, respectively and let $Q$ be the Ricci operator of
the doubly warped product $( _{{f}_{2}}\!M_1\!\times_{{f}_{1}}\!M_2,g).$ Then, by a direct computation and by using \eqref{e12} and \eqref{e14}, we have the following relations.
\begin{lemma} \label{L5}
Let $X\in\mathfrak{L}(M_{1})$ and $U\in\mathfrak{L}(M_{2})$. Then, we have
\begin{equation}\label{e124}
\begin{array}{ll}
QX=\frac{1}{f_2^2}\left(\,^{1}\!QX-\frac{m_2}{f_1}H^{f_1}(X)-f_2^2\Delta l \cdot X\right),
\end{array}
\end{equation}
\begin{equation}\label{e144}
\begin{array}{ll}
QU=\frac{1}{f_1^2}\left(\,^{2}\!QU-\frac{m_1}{f_2}H^{f_2}(U)-f_1^2\Delta k \cdot U\right).
\end{array}
\end{equation}
\end{lemma}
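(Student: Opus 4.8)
The plan is to read off $Q$ from the defining identity $g(QE,F)=\Ric(E,F)$ of the Ricci operator, exploiting the $g$-orthogonality of the distributions $\mathfrak{L}(M_1)$ and $\mathfrak{L}(M_2)$. For \eqref{e124} I would fix $X\in\mathfrak{L}(M_1)$, take an arbitrary $Y\in\mathfrak{L}(M_1)$, evaluate $g(QX,Y)=\Ric(X,Y)$ by substituting \eqref{e12}, and then convert each of the three resulting terms into a $g$-inner product; \eqref{e144} is obtained by the same computation starting from \eqref{e14}, with the two factors interchanged ($1\leftrightarrow 2$, $k\leftrightarrow l$, $f_1\leftrightarrow f_2$, $m_1\leftrightarrow m_2$, $X\leftrightarrow U$).

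The conversions go as follows. By the definitions of the lifted first-factor Ricci operator and of the corresponding Hessian operator one has $^1\!\Ric(X,Y)=g_1(^1\!QX,Y)$ and $h_1^{f_1}(X,Y)=g_1(H^{f_1}(X),Y)$; since both $^1\!QX$ and $H^{f_1}(X)$ lie in $\mathfrak{L}(M_1)$, on which $g(\cdot,\cdot)=f_2^{2}\,g_1(\cdot,\cdot)$, these equal $\tfrac{1}{f_2^{2}}g(^1\!QX,Y)$ and $\tfrac{1}{f_2^{2}}g(H^{f_1}(X),Y)$ respectively, while the remaining term is simply $g(X,Y)\,\Delta l=g(\Delta l\cdot X,Y)$. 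Plugging these into \eqref{e12} yields, for every $Y\in\mathfrak{L}(M_1)$,
\[
g(QX,Y)=g\!\left(\frac{1}{f_2^{2}}\Big(\,^1\!QX-\frac{m_2}{f_1}H^{f_1}(X)-f_2^{2}\,\Delta l\cdot X\Big),\,Y\right),
\]
and since the vector in parentheses belongs to $\mathfrak{L}(M_1)$, on which $g$ is nondegenerate, \eqref{e124} follows. The identity \eqref{e144} is the mirror statement, argued from \eqref{e14} in the same way.

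I expect the computation to be conceptually straightforward; the one place where a slip is easy is the bookkeeping of the warping factors. One must keep track of whether an index is raised with $g$ or with $g_i$ — these differ by $f_2^{2}$ on $\mathfrak{L}(M_1)$ and by $f_1^{2}$ on $\mathfrak{L}(M_2)$ — and one must be careful that $H^{f_1}$ denotes the Hessian operator attached to the first factor rather than the covariant derivative $\nabla_X\nabla f_1$ of the full gradient on the product, which by \eqref{e1} carries an extra $\nabla l$ term. It is also worth noting, via \eqref{e13}, that $Q$ need not preserve the splitting $\mathfrak{L}(M_1)\oplus\mathfrak{L}(M_2)$, so that \eqref{e124} and \eqref{e144} should be understood as recording the factor-tangential components of $QX$ and $QU$; this is exactly the part of $Q$ that enters the soliton computations later on.
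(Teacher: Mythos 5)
Your proposal is correct and is precisely the paper's own (unspelled-out) argument: Lemma \ref{L5} is obtained by raising an index in \eqref{e12} and \eqref{e14}, using $g(QX,Y)=\Ric(X,Y)$ together with $g=f_2^2\,g_1$ on $\mathfrak{L}(M_1)$ and $g=f_1^2\,g_2$ on $\mathfrak{L}(M_2)$, and nondegeneracy of $g$ on each factor distribution. Your two caveats are also well placed: $H^{f_1}$ and $H^{f_2}$ must indeed be read as the factor Hessian operators (the interpretation consistent with the contractions producing $\Delta_1 f_1$, $\Delta_2 f_2$ in Section 5, despite the ambient convention stated in Lemma \ref{L1}), and since $\Ric(X,U)\neq 0$ in general by \eqref{e13}, the displayed formulas record only the factor-tangential components of $QX$ and $QU$, which is all that is used later.
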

\begin{remark} \label{re1}
Let $\{e_{1},\dots, e_{m_{1}}, \omega_{1},\dots, \omega_{m_{2}}\}$ be an orthonormal basis of the doubly warped product $(_{{f}_{2}}\!M_1\!\times_{f_1}\!M_2, g),$
where $\{e_{1},\dots,e_{m_{1}}\}$ are tangent to $M_1$ and $\{\omega_{1},\dots, \omega_{m_{2}}\}$ are tangent to $M_2$.
Then by \eqref{e0}, we see that $\{f_{2}e_{1},\dots, f_{2}e_{m_{1}}\}$ is an orthonormal basis of $(M_1, g_1)$ and $\{f_{1}\omega_{1},\dots, f_{1}\omega_{m_{2}}\}$ is an orthonormal basis of $(M_2, g_2).$
\end{remark}
Let $^{1}\!\tau$ and $^{2}\!\tau$ be the lifts of scalar curvatures of $(M_1,g_1)$ and $(M_2,g_2)$, respectively and let $\tau$ be the scalar curvature of
the doubly warped product $(_{{f}_{2}}\!M_1\!\times_{f_1}\!M_2, g).$ Then, by Lemma \ref{L2} and Remark \ref{re1}, we obtain
\begin{equation}
\label{scal0}
\begin{array}{c}
\tau=\displaystyle\frac{^{1}\!\tau}{f^{2}_{2}}+\displaystyle\frac{^{2}\!\tau}{f^{2}_{1}}-\displaystyle\frac{m_{2}}{f_1f^{2}_{2}}\Delta_{1} f_1-\displaystyle\frac{m_{1}}{f^{2}_{1}f_2}\Delta_{2} f_2-m_{1}\Delta l-m_{2}\Delta k,
\end{array}
\end{equation}
where $\Delta_{i}$ is the lift of the Laplacian operator on $(M_{i}, g_{i})$ and $m_{i}=\dim(M_{i})$, for $i\in\{1,2\}.$

\subsection{Gradient Yamabe, Gradient Ricci and Gradient Riemann Solitons}

A pseudo-Riemannian manifold $(M^m,g)$ is said to be a \emph{gradient Yamabe soliton} \cite{ch}
if there exists a smooth function $\psi$ on $M$ and a real constant $\lambda$ satisfying
\begin{equation}\label{e15}
\begin{array}{ll}
h^{\psi}=(\tau-\lambda)g,
\end{array}
\end{equation}
where $\tau$ denotes the scalar curvature of $(M,g)$.
More generally, if
\begin{equation}\label{e16}
h^{\psi}=\gamma g
\end{equation}
holds for some smooth function $\gamma$, then the triple $(M,g,\psi)$ is called \emph{conformal gradient soliton}.\\

A pseudo-Riemannian manifold $(M^m,g)$ is said to be a \textit{gradient Ricci soliton} \cite{ha} if there exists a smooth function $\psi$ on $M$ and a real constant $\lambda$ satisfying
\begin{equation}\label{e17}
h^{\psi}+\Ric=\lambda g,
\end{equation}
where $\Ric$ is the Ricci curvature of $(M,g)$.\\

A pseudo-Riemannian manifold $(M^m,g)$ is said to be a \emph{gradient Riemann soliton} \cite{hi} if there exists a smooth function $\psi$ on $M$ and a real constant $\lambda$ satisfying
\begin{equation}\label{riemann}
h^{\psi}\wedge g+R=\lambda G,
\end{equation}
where $R$ is the Riemann curvature of $(M,g)$, $G=\frac{1}{2}(g\wedge g)$ and $\wedge$ is the Kulkarni-Nomizu product.
Then for any vector field $X,Y,Z,W$ on $M$, equation \eqref{riemann} is explicitly expressed as
\begin{equation}\label{riemannequation}
R(X,Y,Z,W)+g(X,W)h^{\psi}(Y,Z)+g(Y,Z)h^{\psi}(X,W)
\end{equation}
$$-g(X,Z)h^{\psi}(Y,W)-g(Y,W)h^{\psi}(X,Z)=\lambda \left(g(X,W)g(Y,Z)-g(X,Z)g(Y,W)\right),$$
which by contraction over $X$ and $W$, gives
\begin{equation}\label{riemannlie}
h^{\psi}+\frac{1}{m-2}\Ric=\frac{(m-1)\lambda-\Delta \psi}{m-2}g,
\end{equation}
provided $m\geq 3$. If $m=2$, then $\Ric=(\lambda-\Delta \psi)g$.\\

Generalizing the notions of Yamabe and Ricci soliton, we talk about $\eta$-Yamabe and $\eta$-Ricci soliton.\\

A pseudo-Riemannian manifold $(M^m,g)$ is said to be a \emph{gradient $\eta$-Yamabe soliton} \cite{de}
if there exist smooth functions $\psi$, $\lambda$ and $\mu$ on $M$ satisfying
\begin{equation}\label{e06}
h^{\psi}=(\tau-\lambda)g+\mu \eta\otimes \eta,
\end{equation}
where $\tau$ denotes the scalar curvature of $(M,g)$ and $\eta$ is a $1$-form on $M$.\\

A pseudo-Riemannian manifold $(M^m,g)$ is said to be a \emph{gradient $\eta$-Ricci soliton} \cite{cho}
if there exist smooth functions $\psi$, $\lambda$ and $\mu$ on $M$ satisfying
\begin{equation}\label{e07}
h^{\psi}+\Ric=\lambda g+\mu \eta\otimes \eta,
\end{equation}
where $\eta$ is a $1$-form on $M$.\\

In all of the above cases, if $\lambda > 0, \lambda < 0$ or $\lambda = 0$, then the soliton is called a shrinking, expanding or steady, respectively.
If $\lambda$ is allowed be a smooth function on $M$, then $(M,g,\psi)$ is called a \emph{gradient almost Yamabe}, a \emph{gradient almost Ricci},
a \emph{gradient almost Riemann}, a \textit{gradient almost $\eta$-Yamabe} and a \textit{gradient almost $\eta$-Ricci soliton}, respectively.\\

Another generalization for the notion of Ricci soliton was introduced in \cite{gom}. \\

A pseudo-Riemannian manifold $(M^m,g)$ is said to be a \emph{gradient $f$-almost Ricci soliton} \cite{gom}
if there exist smooth functions $f$, $\psi$ and $\lambda$ on $M$ satisfying
\begin{equation}\nonumber
f h^{\psi}+\Ric=\lambda g.
\end{equation}

We generalize this notion to gradient $f$-almost $\eta$-Ricci soliton as follows.\\

A pseudo-Riemannian manifold $(M^m,g)$ is said to be a \emph{gradient $f$-almost $\eta$-Ricci soliton}
if there exist smooth functions $f$, $\psi$, $\lambda$ and $\mu$ on $M$ satisfying
\begin{equation}\nonumber
f h^{\psi}+\Ric=\lambda g+\mu \eta\otimes \eta,
\end{equation}
where $\eta$ is a $1$-form on $M$.

A Riemannian manifold $(M^m,g)$, $m\geq 2$, is said to be an \emph{Einstein manifold} \cite{Al} if its Ricci tensor $\Ric$ satisfies the condition $\Ric=\frac{\tau}{m}g$, where $\tau$ denotes the \emph{scalar curvature} of $(M,g)$. A non-flat Riemannian manifold $(M,g)$, $m\geq 2$, is said to be a \emph{quasi-Einstein} \cite{Cha} if the condition
\begin{equation}
\label{e20}
\begin{array}{c}
\Ric=\alpha g+\beta A\otimes A
\end{array}
\end{equation}
is fulfilled on $M$, where $\alpha$ and $\beta$ are scalar functions on $M$ with $\beta \neq 0$ and $A$ is non-zero $1$-form such that
$g(X,\xi)=A(X)$,
for every vector field $X$ on $M$, $\xi$ being a unit vector field which is called the generator of the manifold $M.$ If $\beta=0$, then the manifold reduces to an Einstein manifold.

\section{Main Results}

We first give some characterization for doubly warped product manifolds.\\

Let $(M=_{{f}_{2}}\!M_1 \!\times_{{f}_{1}}\!M_2, g)$ be a doubly warped product manifold. Then, by using
\eqref{e04}--\eqref{0e5} and Remark \ref{re1}, we have
\begin{equation}
\label{b1}
\Delta k=\displaystyle\frac{{1}}{f^{2}_{2}}\Delta_{1}k+m_{2}f^{2}_{2}g_{1}(\nabla k, \nabla k)
\end{equation}
and
\begin{equation}
\label{b2}
\Delta l=\displaystyle\frac{{1}}{f^{2}_{1}}\Delta_{2}l+m_{1}f^{2}_{1}g_{2}(\nabla l, \nabla l).
\end{equation}
We now suppose that the first factor manifold $M_{1}$ is compact and the warping function $k$
is harmonic with respect to the Laplacian $\Delta$. Then we have
\begin{equation}
\label{b3}
\Delta_{1} k=-m_{2}f^{4}_{2}g_{1}(\nabla k, \nabla k),
\end{equation}
from \eqref{b1}, which says that $\Delta_{1} k$ has constant sign. More precisely, we find
$\Delta_{1} k\leq\nolinebreak 0$ on $M_{1}$. Thus, by Hopf's lemma, we conclude that $f_{1}$ is a constant function (say
$f_{1}=c_{1}=constant$) on $M_{1}$, since $M_{1}$ is compact. Hence, we can write $g=\nolinebreak f^{2}_{2}g_1\oplus \tilde{g_{2}}$,
where $\tilde{g_{2}}=c^{2}_{1} g_2$. Namely, ${}_{{f}_{2}}\!M_1\!\times_{{f}_{1}}\!M_2$
can be expressed as a warped product $_{{f}_{2}}\!M_1\!\times M_2$ with warping function $f_2$,
where the metric tensor of $M_2$ is $\tilde{g_{2}}$ given above. Therefore, we can state
\begin{theorem} \label{mnth1}
Let $(M=_{{f}_{2}}\!M_1 \!\times_{{f}_{1}}\!M_2, g)$ be a doubly warped product manifold with
the first factor manifold $M_{1}$ compact. If the warping function $k$
is harmonic with respect to the Laplacian $\Delta,$ then $M$ is a
warped product manifold of the form $_{{f}_{2}}\!M_1\!\times M_2$.
\end{theorem}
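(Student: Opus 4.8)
The plan is to read the conclusion off the Laplacian decomposition \eqref{b1} together with the maximum principle on the compact factor $M_1$. To say that $k$ is harmonic with respect to $\Delta$ means precisely that $\Delta k\equiv 0$ on $M$, so substituting into \eqref{b1} gives at once $\Delta_1 k=-m_2 f_2^{4}\,g_1(\nabla k,\nabla k)$, which is formula \eqref{b3}.

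The decisive point is the sign of the right-hand side of \eqref{b3}: since $f_2>0$ and $(M_1,g_1)$ is a genuine Riemannian manifold we have $g_1(\nabla k,\nabla k)\ge 0$, hence $\Delta_1 k\le 0$ at every point of $M_1$. In other words $k=\ln f_1$, regarded as a function on $M_1$, is superharmonic, and a superharmonic function on a compact connected Riemannian manifold must be constant --- this is Hopf's lemma, or, concretely, $\int_{M_1}\Delta_1 k\,dV_{g_1}=0$ together with $\Delta_1 k\le 0$ forces $\Delta_1 k\equiv 0$, so $k$ is harmonic on the compact $M_1$ and therefore constant. Consequently $f_1=e^{k}$ is a positive constant $c_1$ on $M_1$.

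It then remains to recognize the resulting metric: plugging $f_1\equiv c_1$ into \eqref{e0} yields $g=f_2^{2}g_1\oplus \tilde g_2$ with $\tilde g_2:=c_1^{2}g_2$, which is exactly the metric of the warped product $_{f_2}M_1\times M_2$ with warping function $f_2$ over the first factor and no warping over the second, so $M$ is identified with that warped product. There is no genuinely hard step once \eqref{b1} is in hand; the only points that need care are that the sign of $\Delta_1 k$ in \eqref{b3} is truly definite (this uses the Riemannian, not merely pseudo-Riemannian, signature of $g_1$) and that $M_1$ is connected, so that Hopf's lemma promotes "$\Delta_1 k$ has a constant sign" to "$f_1$ is constant". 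Note, finally, that the argument is insensitive to the behaviour of $f_2$: although $f_2$ enters the right-hand side of \eqref{b3}, only its positivity is used.
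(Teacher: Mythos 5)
Your argument is correct and coincides with the paper's own proof: substitute $\Delta k=0$ into \eqref{b1} to get \eqref{b3}, observe $\Delta_1 k\le 0$, and use Hopf's lemma (equivalently, the integration argument you give) on the compact factor $M_1$ to conclude $f_1$ is constant, after which the metric is recognized as that of $_{f_2}\!M_1\times M_2$. Your added remarks on connectedness and the Riemannian signature only make explicit what the paper uses implicitly.
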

By using \eqref{b2}, we can prove the following result in a similar way.
\begin{theorem} \label{mnth2}
Let $(M=_{{f}_{2}}\!M_1 \!\times_{{f}_{1}}\!M_2, g)$ be a doubly warped product manifold with
the second factor manifold $M_{2}$ compact. If the warping function $l$
is harmonic with respect to the Laplacian $\Delta$, then $M$ is a
warped product manifold of the form  $M_1 \!\times_{{f}_{1}}\!M_2.$
\end{theorem}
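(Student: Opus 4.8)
The plan is to mirror the argument that established Theorem \ref{mnth1}, with the roles of the two factors and of the functions $k$ and $l$ interchanged. The symmetry of the doubly warped product construction, reflected in the pair of identities \eqref{b1} and \eqref{b2}, makes this a routine transcription, but it is worth laying out the steps explicitly.

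First I would start from \eqref{b2}, which expresses the ambient Laplacian $\Delta l$ in terms of the intrinsic quantity $\Delta_2 l$ on $M_2$ and the term $m_1 f_1^2 g_2(\nabla l,\nabla l)$. The hypothesis that $l$ is harmonic with respect to $\Delta$ means $\Delta l=0$, so \eqref{b2} immediately gives
\begin{equation}\nonumber
\Delta_2 l=-m_1 f_1^4\, g_2(\nabla l,\nabla l).
\end{equation}
Since $f_1>0$ and $g_2$ is Riemannian, the right-hand side is $\leq 0$ everywhere on $M_2$; that is, $l$ is a superharmonic function on $M_2$ with respect to $\Delta_2$.

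Next, because $M_2$ is assumed compact, Hopf's lemma applies: a function on a compact Riemannian manifold whose Laplacian has constant sign must be constant, so $l$ is constant on $M_2$, and hence $f_2=e^{l}$ is a constant, say $f_2=c_2$. Substituting back, the metric \eqref{e0} becomes $g=c_2^2\,\pi_1^*(g_1)+(f_1\circ\pi_1)^2\,\pi_2^*(g_2)$, i.e. $g=\tilde g_1\oplus (f_1\circ\pi_1)^2 g_2$ with $\tilde g_1=c_2^2 g_1$. This is exactly the form of a (singly) warped product $M_1\times_{f_1}M_2$ with warping function $f_1$ on the first factor, where the first factor carries the rescaled metric $\tilde g_1$. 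Since the definition in the preliminaries records that a doubly warped product with one constant warping function is a warped product, the conclusion follows.

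I do not anticipate any real obstacle here: the only nontrivial input is Hopf's lemma, and that is used in precisely the same way as in the proof of Theorem \ref{mnth1}. The one point to be careful about is the sign — one must verify that with $\Delta l=0$ the expression in \eqref{b2} forces $\Delta_2 l\leq 0$ (not $\geq 0$), so that the compactness argument is applied in the correct direction — but since $m_1>0$, $f_1>0$, and $g_2(\nabla l,\nabla l)\geq 0$, this is immediate. Thus the proof is a direct analogue of the previous one, obtained by swapping $(M_1,k)\leftrightarrow(M_2,l)$ throughout.
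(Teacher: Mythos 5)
Your proposal is correct and is essentially the paper's own argument: the paper proves Theorem \ref{mnth2} exactly by applying the reasoning used for Theorem \ref{mnth1} to \eqref{b2}, obtaining $\Delta_2 l=-m_1f_1^4\,g_2(\nabla l,\nabla l)\leq 0$ and invoking Hopf's lemma on the compact factor $M_2$ to conclude $f_2$ is constant. Your sign check and the rescaling $\tilde g_1=c_2^2 g_1$ exhibiting $M$ as the warped product $M_1\times_{f_1}M_2$ are precisely the steps the paper intends.
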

From Theorems \ref{mnth1} and \ref{mnth2}, we get the following result.
\begin{theorem} \label{mnth3}
Let $(M=_{{f}_{2}}\!M_1 \!\times_{{f}_{1}}\!M_2, g)$ be a compact doubly warped product manifold.
If the warping functions $k$ and $l$ are harmonic with respect to the Laplacian $\Delta,$
then $M$ is a usual product manifold of the form $M_1 \times M_2.$
\end{theorem}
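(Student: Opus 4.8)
The plan is to obtain this statement as an immediate consequence of Theorems \ref{mnth1} and \ref{mnth2}, the only additional ingredient being the elementary observation that compactness of the total space forces compactness of each factor. First I would note that the canonical projections $\pi_1,\pi_2$ are surjective smooth (hence continuous) maps from $M=M_1\times M_2$; therefore $M_i=\pi_i(M)$ is the continuous image of the compact space $M$, and so both $M_1$ and $M_2$ are compact.

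Next, since $M_1$ is compact and $k=\ln f_1$ is harmonic with respect to $\Delta$, Theorem \ref{mnth1} applies and yields that $f_1$ is a positive constant (say $c_1$) on $M_1$. Symmetrically, since $M_2$ is compact and $l=\ln f_2$ is harmonic with respect to the same Laplacian $\Delta$, Theorem \ref{mnth2} gives that $f_2$ is a positive constant (say $c_2$) on $M_2$. I would explicitly stress that there is no circularity: the hypotheses of both theorems concern the fixed Laplacian $\Delta$ of $(M,g)$ and are assumed outright, so the two conclusions may be drawn independently without one perturbing the other.

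Finally, with $f_1\equiv c_1$ and $f_2\equiv c_2$ both constant, equation \eqref{e0} reduces to $g=c_2^2\,g_1\oplus c_1^2\,g_2=\tilde g_1\oplus\tilde g_2$, which is a genuine Riemannian product metric; hence $M$ is a usual (direct) product manifold of the form $M_1\times M_2$, as claimed. I do not expect any real obstacle here; the one point worth recording carefully is simply that making the warping function constant in one factor does not disturb the harmonicity hypothesis needed for the other factor, which is immediate because both hypotheses refer to $\Delta$ on $(M,g)$ rather than to any operator induced on a factor.
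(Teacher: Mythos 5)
Your proposal is correct and follows exactly the paper's route: the paper obtains this statement directly from Theorems \ref{mnth1} and \ref{mnth2}, and your argument just makes explicit the (correct) observations that compactness of $M$ forces compactness of each factor and that the two applications are independent since both harmonicity hypotheses refer to the Laplacian $\Delta$ of $(M,g)$.
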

Next, we characterize gradient Yamabe solitons on doubly warped product structures.
\begin{theorem} \label{mth1}
Let $(M=_{{f}_{2}}\!M_1 \!\times_{{f}_{1}}\!M_2, g)$ be a doubly warped product manifold. If $(M, g)$ is a gradient Yamabe soliton
whose potential function $\psi$ only depends on the points of $M_{1},$ then it is a
warped product manifold of the form $M_1 \!\times_{{f}_{1}}\!M_2.$
\end{theorem}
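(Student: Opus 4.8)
The plan is to test the gradient Yamabe soliton equation \eqref{e15} against a mixed pair of vector fields, one tangent to each factor. Since the potential $\psi$ is the pullback of a function on $M_1$, we have $U(\psi)=0$ for every $U\in\mathfrak{L}(M_2)$, so \eqref{a4} collapses to $h^\psi(X,U)=-X(\psi)\,U(l)$ for $X\in\mathfrak{L}(M_1)$. On the other hand, by \eqref{e0} the metric $g$ has no mixed terms, hence $g(X,U)=0$ and the right-hand side of \eqref{e15} evaluated on $(X,U)$ vanishes. Therefore
\begin{equation}\nonumber
X(\psi)\,U(l)=0\qquad\text{for all }X\in\mathfrak{L}(M_1),\ U\in\mathfrak{L}(M_2).
\end{equation}

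Next I would read this as a pointwise statement on $M=M_1\times M_2$: at a point $(p_1,p_2)$ the number $X(\psi)$ depends only on $p_1$ and $U(l)$ only on $p_2$, so the displayed identity forces $(d\psi)_{p_1}=0$ or $(dl)_{p_2}=0$ for every such pair. Assuming $\psi$ is non-constant (the case of a constant potential is degenerate: then $h^\psi\equiv 0$ and \eqref{e15} merely says $\tau\equiv\lambda$, and the stated conclusion need not hold), one can pick $p_1^\ast\in M_1$ with $(d\psi)_{p_1^\ast}\neq 0$; then $(dl)_{p_2}=0$ for every $p_2\in M_2$, i.e. $dl\equiv 0$ on $M_2$. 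Since $M_2$ is connected, $l=\ln f_2$ is constant, so $f_2\equiv c_2$ for some positive constant $c_2$.

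Finally, substituting $f_2\equiv c_2$ into \eqref{e0} yields $g=c_2^2\,\pi_1^{*}(g_1)+(f_1\circ\pi_1)^2\,\pi_2^{*}(g_2)=\pi_1^{*}(\tilde g_1)+(f_1\circ\pi_1)^2\,\pi_2^{*}(g_2)$ with $\tilde g_1:=c_2^2 g_1$, which is exactly the warped product metric on $M_1\times_{f_1}M_2$; hence $M$ is a warped product of the asserted form. The computation itself is short: the only point needing care is the passage from the pointwise dichotomy $X(\psi)\,U(l)=0$ to ``$f_2$ constant'', which uses that $\psi$ is non-constant and that $M_2$ is connected — I would flag the constant-potential case explicitly rather than leave it implicit.
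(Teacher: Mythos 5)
Your proposal is correct and follows essentially the same route as the paper's own proof: evaluate \eqref{e15} together with \eqref{a4} on a mixed pair $X\in\mathfrak{L}(M_1)$, $U\in\mathfrak{L}(M_2)$, use $U(\psi)=0$ and $g(X,U)=0$ to get $X(\psi)U(l)=0$, conclude $U(l)=0$ so that $f_2$ is constant, and then rewrite $g$ as the warped product metric $\tilde g_1\oplus f_1^2 g_2$ with $\tilde g_1=c_2^2 g_1$. The only difference is that you are more careful in the passage from $X(\psi)U(l)=0$ to $l=\mathrm{constant}$, explicitly flagging the need for a non-constant potential (and connectedness of $M_2$), a point the paper's proof leaves implicit.
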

\begin{proof} Under the given conditions in the hypothesis, for any $X\in\mathfrak{L}(M_{1})$ and $U\in\mathfrak{L}(M_{2}),$ we have
\begin{equation}\nonumber
X(k)U(\psi)-X(\psi)U(l)=0,
\end{equation}
from \eqref{a4} and \eqref{e15}. We know $U(\psi)=0,$ since $\psi$ only depends on the points of $M_{1}$, so, we obtain
\begin{equation}\nonumber
X(\psi)U(l)=0.
\end{equation}
Hence, we get $U(l)=0$, for all $U\in\mathfrak{L}(M_{2}).$ Then, we find $l=constant,$ so $f_{2}=c_{2}$ for some constant $c_{2}.$
Thus, we can write $g=\tilde{g_1}\oplus f^{2}_{1}g_{2}$, where $\tilde{g_{1}}=c^{2}_{2} g_1$, that is, ${}_{{f}_{2}}\!M_1\!\times_{{f}_{1}}\!M_2$ can be expressed as a warped product $M_1\!\times _{{f}_{1}}\!M_2$ with warping function $f_1$, where the metric tensor of $M_1$ is $\tilde{g_{1}}$ given above.
This proves the assertion, as desired.
\end{proof}
In a similar way, we have the following result.
\begin{theorem} \label{mth2}
Let $(M=_{{f}_{2}}\!M_1 \!\times_{{f}_{1}}\!M_2, g)$ be a doubly warped product manifold. If $(M, g)$ is a gradient Yamabe soliton
whose potential function $\psi$ only depends on the points of $M_{2},$ then it is a
warped product manifold of the form $_{{f}_{2}}\!M_1 \!\times M_2.$
\end{theorem}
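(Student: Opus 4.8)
The plan is to mirror exactly the argument given for Theorem~\ref{mth1}, only with the roles of the two factors interchanged. I would start from the fundamental mixed Hessian identity~\eqref{a4}, which holds on any doubly warped product: for $X\in\mathfrak{L}(M_1)$ and $U\in\mathfrak{L}(M_2)$,
\[
h^\psi(X,U)=X(k)U(\psi)-X(\psi)U(l).
\]
Since $(M,g,\psi)$ is a gradient Yamabe soliton, the defining equation~\eqref{e15} gives $h^\psi=(\tau-\lambda)g$, and because $X\in\mathfrak{L}(M_1)$ and $U\in\mathfrak{L}(M_2)$ are $g$-orthogonal (by the block form~\eqref{e0}), we get $h^\psi(X,U)=0$. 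Hence $X(k)U(\psi)-X(\psi)U(l)=0$ for all such $X,U$.

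Next I would use the hypothesis that $\psi$ depends only on the points of $M_2$, which means $X(\psi)=0$ for every $X\in\mathfrak{L}(M_1)$. Substituting this into the previous identity leaves $X(k)U(\psi)=0$ for all $X\in\mathfrak{L}(M_1)$, $U\in\mathfrak{L}(M_2)$. Now $\psi$ is genuinely a function on $M_2$; if $\psi$ were constant the soliton equation would force $\tau=\lambda$ and $h^\psi=0$, and one should note this degenerate subcase separately or simply observe that a gradient Yamabe soliton with constant potential is trivial. In the non-trivial case there exists some $U$ with $U(\psi)\neq 0$ at a point, so $X(k)=0$ there for every $X\in\mathfrak{L}(M_1)$; by continuity/connectedness this propagates, giving $X(k)=0$ identically, i.e.\ $k=\ln f_1$ is constant on $M_1$, say $f_1=c_1$.

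With $f_1=c_1$ constant, the metric~\eqref{e0} becomes $g=\widetilde{g_1}\oplus f_2^2\,g_2$ where $\widetilde{g_1}=c_1^2 g_1$, so $({}_{f_2}M_1\times_{f_1}M_2,g)$ is isometric to the warped product ${}_{f_2}M_1\times M_2$ with warping function $f_2$ and base metric $\widetilde{g_1}$ on $M_1$. This is precisely the claimed conclusion. The only delicate point—and the one I would be careful to phrase correctly—is the passage from ``$X(k)U(\psi)=0$ for all $X,U$'' to ``$k$ is constant'': it requires ruling out (or setting aside) the case where $\psi$ is locally constant on $M_2$, and then using a connectedness argument so that the vanishing of $X(k)$ at points where $U(\psi)\neq 0$ extends to all of $M_1$. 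Everything else is a direct transcription of the proof of Theorem~\ref{mth1} with indices $1$ and $2$ swapped and $k,l$ (equivalently $f_1,f_2$) swapped.
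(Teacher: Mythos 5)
Your argument is exactly the mirrored version of the paper's proof of Theorem \ref{mth1}, which is all the paper itself offers for this statement (``in a similar way''): from \eqref{a4} and \eqref{e15} one gets $X(k)U(\psi)-X(\psi)U(l)=0$, the hypothesis kills $X(\psi)$, and the remaining identity forces $X(k)=0$, i.e.\ $f_1=c_1$ constant. Two small remarks: (i) your final description of the metric is transposed from the other proof --- with $f_1=c_1$ one gets $g=f_2^2\,g_1\oplus \tilde{g}_2$ with $\tilde{g}_2=c_1^2 g_2$ (the constant is absorbed into the metric of $M_2$, not of $M_1$), which is what exhibits $M$ as $_{f_2}\!M_1\times M_2$; your stated formula $g=\tilde{g}_1\oplus f_2^2 g_2$ is a copy of the Theorem \ref{mth1} situation and is not the metric at hand, though your stated conclusion is the right one. (ii) Your explicit attention to the degenerate case ($\psi$ constant on $M_2$, where one cannot conclude $X(k)=0$) is more careful than the paper, whose proof of Theorem \ref{mth1} silently passes from $X(\psi)U(l)=0$ to $U(l)=0$ and thus tacitly assumes a nonconstant potential; note, however, that declaring the constant-potential soliton ``trivial'' does not by itself yield the warped-product conclusion, so that subcase is a genuine (shared) caveat rather than something your remark fully disposes of.
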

By Theorems \ref{mth1} and \ref{mth2}, we obtain the following result.
\begin{cor} \label{cor1}
Let $(M=_{{f}_{2}}\!M_1 \!\times_{{f}_{1}}\!M_2, g)$ be a non-trivial doubly warped product manifold. If $(M, g, \psi)$ is a gradient Yamabe soliton,
then the potential function $\psi$ has to depend both on the points of $M_{1}$ and of $M_{2}.$
\end{cor}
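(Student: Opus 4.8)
The plan is to deduce this directly from Theorems \ref{mth1} and \ref{mth2} by contraposition, so essentially no new computation is needed. Suppose $(M, g, \psi)$ is a gradient Yamabe soliton on the non-trivial doubly warped product $M = {}_{f_2}M_1 \times_{f_1} M_2$, and suppose toward a contradiction that the conclusion fails, i.e.\ that $\psi$ does not depend on the points of at least one factor. If $\psi$ depends only on the points of $M_1$ (in particular $U(\psi) = 0$ for every $U \in \mathfrak{L}(M_2)$), then Theorem \ref{mth1} forces $f_2$ to be constant, so $l = \ln f_2$ is constant; but non-triviality of the doubly warped product requires $f_2$ to be non-constant, a contradiction. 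Symmetrically, if $\psi$ depends only on the points of $M_2$, Theorem \ref{mth2} forces $f_1$ to be constant, again contradicting non-triviality. Hence $\psi$ must genuinely depend on the points of both $M_1$ and $M_2$.

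For completeness one can also see this without invoking the theorems, directly from the soliton equation: combining \eqref{a4} with \eqref{e15} gives $X(k)U(\psi) - X(\psi)U(l) = 0$ for all $X \in \mathfrak{L}(M_1)$ and $U \in \mathfrak{L}(M_2)$. If $U(\psi) \equiv 0$, then $X(\psi)U(l) = 0$ everywhere, and picking $X$ with $X(\psi) \neq 0$ at a point where $\psi$ is non-constant along $M_1$ forces $U(l) = 0$, i.e.\ $f_2$ constant; if instead $X(\psi) \equiv 0$, then $X(k)U(\psi) = 0$, which forces $f_1$ constant in the same way. Either alternative contradicts non-triviality.

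The only point requiring a little care is the fully degenerate case in which $\psi$ is globally constant: then $\psi$ vacuously ``depends only on the points of $M_1$'' and also ``depends only on the points of $M_2$'', so either Theorem \ref{mth1} or Theorem \ref{mth2} applies and the contradiction with non-triviality goes through unchanged. Thus there is no substantive obstacle here; the corollary is a formal consequence of the two preceding theorems together with the definition of a non-trivial doubly warped product, and I expect the write-up to be just a couple of lines.
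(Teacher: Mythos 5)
Your proposal is correct and follows essentially the same route as the paper: the paper derives the corollary directly from Theorems \ref{mth1} and \ref{mth2}, exactly the contrapositive argument you give (with your direct computation from \eqref{a4} and \eqref{e15} simply reproducing the proofs of those theorems). Your remark on the globally constant case is a reasonable extra precaution, but formally invoking the theorems as stated, as you do, is all the paper does as well.
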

Now, we aim to investigate the geometry of the factor manifolds of a gradient Yamabe soliton with doubly warped product structure.
\begin{theorem} \label{mtheo1}
Let $(M=_{{f}_{2}}\!M_1 \!\times_{{f}_{1}}\!M_2, g)$ be a doubly warped product manifold. Then $(M,g,\psi)$ is a gradient Yamabe soliton if and only if the following statements hold:\\

\item \textbf{(a)} \quad $(M_1,g_1,\psi_{1})$ is a gradient almost Yamabe soliton; \\

\item \textbf{(b)} \quad $(M_2,g_2,\psi_{2})$ is a gradient almost Yamabe soliton;\\

\item \textbf{(c)} \quad $X(k)U(\psi)-X(\psi)U(l)=0$ for $X\in\mathfrak{L}(M_{1})$ and $U\in\mathfrak{L}(M_{2}),$\\
\\
where $\psi_{i}=\psi|_{M_{i}}$, for $i\in\{1,2\}.$
\end{theorem}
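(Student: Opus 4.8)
The plan is to substitute the gradient Yamabe soliton equation $h^{\psi}=(\tau-\lambda)g$ into the three kinds of vector-field pairs furnished by the orthogonal splitting $TM=TM_{1}\oplus TM_{2}$, read (a), (b), (c) off the three blocks, and then run the computation backwards for the converse. It will be convenient to let $\psi_{i}$ denote the restriction of $\psi$ to a slice (a copy of $M_{i}$), so that the tensor $h^{\psi}_{1}$ (resp. $h^{\psi}_{2}$) introduced in the preliminaries is exactly the $g_{1}$- (resp. $g_{2}$-) Hessian of $\psi_{1}$ (resp. $\psi_{2}$); accordingly I shall read (a) as ``$h^{\psi}_{1}=({}^{1}\!\tau-\lambda_{1})g_{1}$ for some smooth $\lambda_{1}$ on $M_{1}$'' and (b) symmetrically.

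For the direction ``$\Rightarrow$'' I would argue as follows. Taking $X\in\mathfrak{L}(M_{1})$ and $U\in\mathfrak{L}(M_{2})$ we have $g(X,U)=0$, so $h^{\psi}(X,U)=0$, and by \eqref{a4} this is precisely (c). Taking $X,Y\in\mathfrak{L}(M_{1})$ and using \eqref{e4} together with $g(X,Y)=f_{2}^{2}g_{1}(X,Y)$, I get $h^{\psi}_{1}(X,Y)=f_{2}^{2}\bigl(\tau-\lambda-g(\nabla l,\nabla\psi)\bigr)g_{1}(X,Y)$, which is (a) with $\lambda_{1}:={}^{1}\!\tau-f_{2}^{2}\bigl(\tau-\lambda-g(\nabla l,\nabla\psi)\bigr)$; one must check that $\lambda_{1}$ is genuinely a function on $M_{1}$, which holds because the left-hand side is intrinsic to the slice and proportional to $g_{1}$, so its proportionality factor descends. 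Taking $U,V\in\mathfrak{L}(M_{2})$ and using \eqref{e5} gives (b) in the same way, with $\lambda_{2}:={}^{2}\!\tau-f_{1}^{2}\bigl(\tau-\lambda-g(\nabla k,\nabla\psi)\bigr)$.

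For the converse I would run these steps backwards. Condition (c) with \eqref{a4} gives $h^{\psi}(X,U)=0$; condition (a) with \eqref{e4} gives $h^{\psi}(X,Y)=\sigma_{1}g(X,Y)$ with $\sigma_{1}=f_{2}^{-2}({}^{1}\!\tau-\lambda_{1})+g(\nabla l,\nabla\psi)$; condition (b) with \eqref{e5} gives $h^{\psi}(U,V)=\sigma_{2}g(U,V)$ with $\sigma_{2}=f_{1}^{-2}({}^{2}\!\tau-\lambda_{2})+g(\nabla k,\nabla\psi)$. So $h^{\psi}$ is proportional to $g$ on each block and vanishes on mixed pairs; hence $h^{\psi}=\sigma g$ for a single function $\sigma$ once $\sigma_{1}=\sigma_{2}$ is known, and $(M,g,\psi)$ is then a gradient Yamabe soliton exactly when $\tau-\sigma$ is constant, with $\lambda:=\tau-\sigma$.

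The hard part will be this last step: proving $\sigma_{1}=\sigma_{2}$ and that $\tau-\sigma$ has zero differential. I expect to handle it by tracing (a) and (b) over $g_{1},g_{2}$ to get ${}^{i}\!\tau-\lambda_{i}=\tfrac{1}{m_{i}}\Delta_{i}\psi_{i}$, substituting into $\sigma_{1}$ and $\sigma_{2}$, and then comparing with the scalar curvature identity \eqref{scal0} and the Laplacian relations \eqref{b1}--\eqref{b2}; the decisive input is that (c) forces $d\psi$ to be proportional to $d(k+l)$ wherever $k$ or $l$ is non-constant, which makes the cross terms $g(\nabla l,\nabla\psi)$ and $g(\nabla k,\nabla\psi)$ line up. If, on the other hand, (a) and (b) are meant with the particular functions $\lambda_{1},\lambda_{2}$ produced in the ``$\Rightarrow$'' direction, then this compatibility is built in and only the ``$\Rightarrow$'' computation is needed.
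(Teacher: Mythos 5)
Your forward direction is essentially the paper's own argument: substitute $h^{\psi}=(\tau-\lambda)g$ into the orthogonal splitting, get (c) from \eqref{a4} on mixed pairs, and get (a), (b) from \eqref{e4}--\eqref{e5} together with $g=f_2^2g_1\oplus f_1^2g_2$; the only difference is cosmetic, namely that the paper also expands $\tau$ via \eqref{scal0} to display $\lambda_1$ explicitly, whereas you leave $\tau$ inside your $\lambda_1$. Both versions share the same unremarked subtlety that the factor $f_2^2\bigl(\tau-\lambda-g(\nabla l,\nabla\psi)\bigr)$ a priori depends on the $M_2$-coordinates, so what is literally obtained is an almost Yamabe structure on each slice $M_1\times\{q\}$; you at least flag this, the paper does not.

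The genuine gap is the converse, and your instinct that it is ``the hard part'' is correct in a strong sense: with (a) and (b) read as stated, i.e.\ with arbitrary smooth $\lambda_1,\lambda_2$, the converse is false, so the strategy you sketch (tracing (a)--(b) and using (c) to align the cross terms) cannot be completed --- in a trivial doubly warped product (c) is vacuous and gives no control over $g(\nabla k,\nabla\psi)$ versus $g(\nabla l,\nabla\psi)$. Concretely, take $f_2\equiv1$, $M_1=\mathbb{R}^{m_1}$ flat, $\psi$ the lift of the linear coordinate $x^1$, and $f_1=f_1(x^1)$ nonconstant: then (a) holds with $\lambda_1={}^1\tau=0$ since $h_1^{\psi_1}=0$, (b) holds with $\lambda_2={}^2\tau$ since $\psi_2$ is constant, and (c) holds because $U(\psi)=U(l)=0$; yet by \eqref{e4}--\eqref{e5} one has $h^{\psi}=0$ on the $M_1$-block while $h^{\psi}(U,V)=g(\nabla k,\nabla\psi)\,g(U,V)\neq0$, so $h^{\psi}$ is not a multiple of $g$ and $(M,g,\psi)$ is not a gradient Yamabe soliton. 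Hence the equivalence only survives under your fallback reading, with $\lambda_1,\lambda_2$ the specific functions produced in the forward computation, in which case the converse really is the routine verification the paper alludes to. So your proposal reproduces everything the paper actually proves; to finish it, drop the speculative last paragraph and either adopt the fallback reading explicitly or record the counterexample as a caveat to the stated ``if and only if.''
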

\begin{proof} Let $(M=_{{f}_{2}}\!M_1 \!\times_{{f}_{1}}\!M_2,g)$ be a gradient Yamabe soliton with the potential function $\psi$.
Then, we have
\begin{equation}\nonumber
h^{\psi}=(\tau-\lambda)g,
\end{equation}
from \eqref{e15}. Hence, using \eqref{e0}, \eqref{e4} and \eqref{scal0}, we obtain
\begin{equation}\nonumber
h_{1}^{\psi_{1}}=(^{1}\tau-\lambda_{1})g_{1}
\end{equation}
on $M_1,$ where
\begin{equation}
\nonumber
\lambda_{1}=-\displaystyle\frac{f^{2}_{2}}{f^{2}_{1}}\,^{2}\!\tau+f_2^2\big(\lambda+g(\nabla l,\nabla \psi)+m_1 \Delta l+m_2 \Delta k\big)+\frac{1}{f_1^2}(m_2f_1\Delta_1f_1+m_1f_2\Delta_2f_2),
\end{equation}
which means that $(M_1,g_1,\psi_{1})$ is a gradient almost Yamabe soliton, as desired.
The assertion \textbf{(b)} can be obtained in a similar way. On the other hand,
for $X\in\mathfrak{L}(M_{1})$ and $U\in\mathfrak{L}(M_{2})$, using \eqref{a4} and \eqref{e15}, we easily get
assertion \textbf{(c)}. The converse is just a verification.
\end{proof}
Next, we give a characterization for a non-trivial doubly warped product.
\begin{theorem} \label{mthm1}
A non-trivial doubly warped product manifold $(M=_{{f}_{2}}\!M_1 \!\times_{{f}_{1}}\!M_2, g)$
does not admit a conformal gradient soliton.
\end{theorem}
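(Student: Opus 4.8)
The plan is to assume, for contradiction, that a non-trivial doubly warped product $(M={}_{f_2}\!M_1\times_{f_1}\!M_2,g)$ carries a conformal gradient soliton, i.e. there exist a smooth function $\psi$ on $M$ and a smooth function $\gamma$ with $h^\psi=\gamma g$. The first step is to exploit the mixed-type Hessian formula \eqref{a4}: for $X\in\mathfrak{L}(M_1)$ and $U\in\mathfrak{L}(M_2)$ we have $g(X,U)=0$, so \eqref{e16} forces
\begin{equation}\nonumber
X(k)\,U(\psi)-X(\psi)\,U(l)=0
\end{equation}
for all such $X,U$. I would then decompose $\nabla\psi=\nabla_1\psi+\nabla_2\psi$ into its $M_1$- and $M_2$-parts and try to show this relation pins down $\psi$ severely. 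If $\nabla k$ is not identically zero, pick $X$ with $X(k)\neq 0$ at a point; then $U(\psi)$ is proportional to $U(l)$ there, and symmetrically, using a $U$ with $U(l)\neq 0$, $X(\psi)$ is proportional to $X(k)$; this should give $\nabla_1\psi=a\,\nabla k$ and $\nabla_2\psi=b\,\nabla l$ for functions $a,b$ related by the mixed equation.

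The second, and main, step is to compare the $M_1$-type and $M_2$-type restrictions of $h^\psi=\gamma g$ using \eqref{e4} and \eqref{e5}. From \eqref{e4} we get $h^{\psi}_1(X,Y)=(\gamma-g(\nabla l,\nabla\psi))g(X,Y)$, so $(M_1,g_1)$ carries a conformal gradient structure with potential $\psi_1=\psi|_{M_1}$; likewise for $M_2$. The classical Tashiro-type rigidity for conformal gradient fields, or direct manipulation, says the non-constancy of $\gamma-g(\nabla l,\nabla\psi)$ over $M_1$ (and the corresponding function over $M_2$) is tightly constrained. The cleaner route is to feed $h^\psi=\gamma g$ into the contracted Ricci-type identities; in particular one can take $\psi=k$ or $\psi=l$ as test functions via \eqref{e04}--\eqref{0e5}: e.g. \eqref{0e4} already says $h^l(X,Y)=g(\nabla l,\nabla l)g(X,Y)$, which is the conformal relation restricted to the $M_1$-directions but with coefficient $g(\nabla l,\nabla l)$, while on $M_2$-directions $h^l(U,V)=h^l_2(U,V)$ need not be pure trace; applying $h^\psi=\gamma g$ to the differential consequences should yield that both $\nabla k$ and $\nabla l$ are forced to vanish.

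Concretely, I expect the contradiction to come out as follows: from $\nabla_1\psi=a\nabla k$, $\nabla_2\psi=b\nabla l$ and the mixed equation $X(k)U(\psi)=X(\psi)U(l)$, substituting gives $X(k)\,b\,U(l)=a\,X(k)\,U(l)$, hence $(a-b)X(k)U(l)=0$ everywhere. Non-triviality means $\nabla k\not\equiv 0$ and $\nabla l\not\equiv 0$, so on a dense open set $a=b=:c$, i.e. $\nabla\psi=c\,\nabla(k+l)$. Then the pure-$M_1$ equation $h^\psi_1=(\gamma-g(\nabla l,\nabla\psi))g_1$ becomes, after inserting $\psi$ restricted to $M_1$ (where it equals $c\,k$ up to an additive constant, since $\nabla_2\psi$ contributes nothing to $M_1$-derivatives), a statement of the form $c\,h^k_1=(\text{function})g_1$, and comparing with \eqref{e04} and the analogous identity for $l$ should force $\nabla k=0$ on $M_1$ — contradicting non-triviality. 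The delicate point, and the place I expect to spend the most care, is handling the locus where $\nabla k$ or $\nabla l$ vanishes: one must argue, using continuity of $a,b,\gamma$ and the connectedness of the factors, that the relations established on the dense open set propagate to all of $M$, so that $f_1$ or $f_2$ is globally constant and non-triviality fails. I would close the argument by noting that either warping function being constant immediately violates the hypothesis, completing the contradiction.
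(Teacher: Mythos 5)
Your preliminary reductions are sound as far as they go: the mixed identity \eqref{a4} together with \eqref{e16} does give $X(k)U(\psi)-X(\psi)U(l)=0$, and on the open set where $\nabla k\neq0$ and $\nabla l\neq0$ this does force $\nabla_1\psi=a\,\nabla k$, $\nabla_2\psi=b\,\nabla l$ with $a=b$. The genuine gap is exactly where you write ``should force $\nabla k=0$'': nothing in the relations you have derived produces that conclusion. First, from $\nabla\psi=c\,\nabla(k+l)$ you may not conclude that $\psi$ restricted to an $M_1$-slice is $c\,k$ up to a constant, since $c$ is a priori a function on $M$, not a constant. Second, even granting it, an identity of the form $c\,h^k_1=(\text{function})\,g_1$ is not contradictory: \eqref{e04} places no obstruction to $h^k_1$ being pure trace (for $m_1=1$ it is automatic). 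In fact your whole system of necessary conditions (the mixed equation plus the restricted conformal equations obtained from \eqref{e4}--\eqref{e5}) is consistent with non-triviality: the flat metric $y^2dx^2+x^2dy^2$ on $(0,\infty)\times(0,\infty)$ is a non-trivial doubly warped product of two intervals ($f_1=x$, $f_2=y$, both non-constant), and being flat and simply connected it carries a non-constant $\psi$ with $h^{\psi}=g$; consequently every identity you derive is satisfied there with $\nabla k\neq0$ and $\nabla l\neq0$. So no purely algebraic manipulation of those identities can terminate in the contradiction you want; some additional rigidity input (and, for the statement to be meaningful at all, implicit hypotheses such as $\psi$ non-constant) is indispensable. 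The ``delicate point'' you flag at the end (the zero locus of $\nabla k$, $\nabla l$, where your open set is not dense) is likewise acknowledged but not resolved.

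For comparison, the paper's proof is not computational: it notes that \eqref{e16} means $\nabla\psi$ is a concircular vector field (Lemma 4.1 of \cite{Che1}) and then invokes Chen's structure theorem (Theorem 3.1 of \cite{Che1}), by which the manifold is locally a warped product $I\times_{\varphi}F$ over an interval, and declares this incompatible with the non-trivial doubly warped structure. That external concircular-field rigidity (an Obata/Tashiro-type result) is precisely the ingredient your direct approach lacks, and, as the flat example shows, cannot be replaced by the identities \eqref{a4}, \eqref{e4}, \eqref{e5}, \eqref{e04}--\eqref{0e5} alone. As it stands, your proposal establishes necessary conditions on $\psi$, $k$, $l$, but not the theorem.
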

\begin{proof} Let $(M=_{{f}_{2}}\!M_1\!\times_{{f}_{1}}\!M_2, g)$ be a non-trivial doubly warped product manifold.
Assume that $(g,\psi)$ is a conformal gradient soliton on the non-trivial doubly warped product manifold $(M,g)$.
Then, for any vector fields $\bar{X}, \bar{Y}$ on $M$, we have
\begin{equation} \label{ne1}
h^{\psi}(\bar{X}, \bar{Y})=\gamma g(\bar{X}, \bar{Y}),
\end{equation}
from \eqref{e16}, where $\gamma$ is a smooth function on $M$.
By \eqref{ne1} and Lemma 4.1 of \cite{Che1}, we deduce that $\nabla\psi$ is a concircular vector field on $(M, g).$
In which case, it follows that $(M, g)$  is locally a warped product of the form $I\times_{\varphi}F$
from  Theorem 3.1 of \cite{Che1}, where $\varphi$ is a nowhere vanishing smooth function on an open interval $I$ of the real line and
$F$ is an $(m-1)-$dimensional Riemannian manifold, which is a contradiction.
\end{proof}

\begin{theorem} \label{mtheo2}
Let $(M=_{{f}_{2}}\!M_1 \!\times_{{f}_{1}}\!M_2,g)$ be a doubly warped product manifold. Then $(M,g,\psi)$ is a gradient Ricci soliton
with potential function $\psi$ and constant $\lambda$ if and only if the following statements hold:\\

\item \textbf{(a)} \quad $(\nabla^{1}\varphi_{1}, \lambda_{1}, \mu_{1})$ defines a gradient almost $\eta$-Ricci soliton on $(M_1, g_1)$,\\
\\
where $\varphi_{1}=\psi_{1}-m_{2}k,\, \lambda_{1}=f^{2}_{2}(\lambda+\Delta l-g(\nabla l,\nabla\psi))$, $\psi_{1}=\psi|_{M_{1}}$, $\eta=dk$ and $\mu_{1}=m_{2}$;\\
\\
\item \textbf{(b)}\quad $(\nabla^{2}\varphi_{2}, \lambda_{2}, \mu_{2})$ defines a gradient almost $\eta$-Ricci soliton on $(M_2, g_2)$,\\
\\
where $\varphi_{2}=\psi_{2}-m_{1}l,\, \lambda_{2}=f^{2}_{1}(\lambda+\Delta k-g(\nabla k,\nabla\psi))$, $\psi_{2}=\psi|_{M_{2}}$, $\eta=dl$ and $\mu_{2}=m_{1}$; \\

\item \textbf{(c)} \quad$(m_1+m_2-2)X(k)U(l)=X(\psi)U(l)-X(k)U(\psi)$, for $X\in\mathfrak{L}(M_{1})$ and $U\in\mathfrak{L}(M_{2}).$
\end{theorem}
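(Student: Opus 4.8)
The plan is to expand the defining identity $h^{\psi}+\Ric=\lambda g$ separately on the three kinds of pairs of vector fields into which the tangent bundle of a doubly warped product decomposes: $(X,Y)$ with $X,Y\in\mathfrak{L}(M_{1})$, $(U,V)$ with $U,V\in\mathfrak{L}(M_{2})$, and the mixed pair $(X,U)$. Since $\mathfrak{L}(M_{1})\oplus\mathfrak{L}(M_{2})$ spans the vector fields on $M$ and both $h^{\psi}+\Ric$ and $\lambda g$ are symmetric $(0,2)$-tensors, the soliton equation is equivalent to the conjunction of the three resulting identities, and I would show that these are equivalent to \textbf{(a)}, \textbf{(b)} and \textbf{(c)}, respectively. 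This reduces the whole statement to three local tensor computations.

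For the mixed pair, $g(X,U)=0$ by \eqref{e0}, so the soliton equation reduces to $h^{\psi}(X,U)+\Ric(X,U)=0$; substituting \eqref{a4} and \eqref{e13} gives $X(k)U(\psi)-X(\psi)U(l)+(m_{1}+m_{2}-2)X(k)U(l)=0$, which is exactly \textbf{(c)}. For $X,Y\in\mathfrak{L}(M_{1})$ I would plug \eqref{e4} into $h^{\psi}$ and \eqref{e12} into $\Ric$. The step needing care is rewriting $h_{1}^{f_{1}}$ in terms of $k=\ln f_{1}$: from $f_{1}=e^{k}$ one gets $h_{1}^{f_{1}}(X,Y)=f_{1}\big(h_{1}^{k}(X,Y)+X(k)Y(k)\big)$, hence $\tfrac{m_{2}}{f_{1}}h_{1}^{f_{1}}(X,Y)=m_{2}h_{1}^{k}(X,Y)+m_{2}(dk\otimes dk)(X,Y)$. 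Using $h^{k}(X,Y)=h_{1}^{k}(X,Y)$ from \eqref{e04} and linearity of the Hessian, $h_{1}^{\psi}-m_{2}h_{1}^{k}=h_{1}^{\varphi_{1}}$ with $\varphi_{1}=\psi_{1}-m_{2}k$; collecting the metric terms with $g(X,Y)=f_{2}^{2}\,g_{1}(X,Y)$, the $M_{1}$-component of the soliton equation becomes
$$h_{1}^{\varphi_{1}}+{}^{1}\Ric=f_{2}^{2}\big(\lambda+\Delta l-g(\nabla l,\nabla\psi)\big)g_{1}+m_{2}\,dk\otimes dk,$$
i.e.\ $(\nabla^{1}\varphi_{1},\lambda_{1},\mu_{1})$ with $\lambda_{1}=f_{2}^{2}(\lambda+\Delta l-g(\nabla l,\nabla\psi))$, $\mu_{1}=m_{2}$, $\eta=dk$ defines a gradient almost $\eta$-Ricci soliton on $(M_{1},g_{1})$, which is \textbf{(a)}. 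The computation for $(U,V)$ is the mirror image, using \eqref{e5}, \eqref{e14}, $f_{2}=e^{l}$, the identity $h^{l}(U,V)=h_{2}^{l}(U,V)$ from \eqref{0e5} and $g(U,V)=f_{1}^{2}g_{2}(U,V)$, and yields \textbf{(b)}.

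For the converse one simply runs these three computations backwards: assuming \textbf{(a)}, \textbf{(b)} and \textbf{(c)}, each of the three component identities of $h^{\psi}+\Ric=\lambda g$ holds, and by bilinearity and symmetry the full tensor identity follows on all of $M$. The main obstacle is bookkeeping rather than conceptual: correctly converting the Hessian of the warping function into the Hessian of its logarithm plus a rank-one correction, recognizing that this rank-one term forces an $\eta$-Ricci (not a plain Ricci) soliton on each factor, with $\eta$ the differential of the logarithmic warping function and $\mu_{i}=m_{j}$, and tracking how the constant $\lambda$ together with the cross terms $g(\nabla l,\nabla\psi)$, $\Delta l$ (respectively $g(\nabla k,\nabla\psi)$, $\Delta k$) get bundled into the coefficients $\lambda_{1},\lambda_{2}$ — which, involving the other factor's data, are genuinely non-constant, so the factors carry only \emph{almost} solitons.
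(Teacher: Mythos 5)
Your proposal is correct and follows essentially the same route as the paper: decompose $h^{\psi}+\Ric=\lambda g$ along the pairs $(X,Y)$, $(U,V)$, $(X,U)$, use \eqref{e4}, \eqref{e12} (resp.\ \eqref{e5}, \eqref{e14}) together with the identity $\tfrac{1}{f_1}h_1^{f_1}=h_1^{k}+dk\otimes dk$ to absorb the warping-function Hessian into $\varphi_1=\psi_1-m_2k$ (and symmetrically for $M_2$), and read off \textbf{(c)} from \eqref{a4}, \eqref{e13} with $g(X,U)=0$. Your explicit remark that the converse follows by bilinearity and symmetry from the three component identities is exactly the paper's ``the converse is just a verification.''
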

\begin{proof} Let $(M=_{{f}_{2}}\!M_1 \!\times_{{f}_{1}}\!M_2,g)$ be a gradient Ricci soliton with the potential function $\psi$ and constant $\lambda$.
Then, we have
\begin{equation}\nonumber
\Ric+h^{\psi}=\lambda g,
\end{equation}
from \eqref{e17}. Hence, using \eqref{e0} and \eqref{e12}, we obtain
\begin{equation} \nonumber
^1\!\Ric+h_{1}^{\psi_{1}}=\lambda_{1}g_1+\frac{m_2}{f_1}h_{1}^{f_1}
\end{equation}
on $M_{1}$, where $\lambda_{1}=f^{2}_{2}(\lambda+\Delta l-g(\nabla l,\nabla\psi))$ and $\psi_{1}=\psi|_{M_{1}}.$ By using the fact that
$\frac{1}{f_1}h_{1}^{f_1}=h_{1}^{\ln f_1}+\frac{1}{f_{1}^{2}}df_{1}\otimes df_{1}$, we obtain
\begin{equation}\nonumber
^1\!\Ric+h_{1}^{\psi_{1}}=\lambda_{1}g_1+m_2h_{1}^{k}+m_2dk\otimes dk.
\end{equation}
By direct computations, we get
\begin{equation} \nonumber
^1\!\Ric+h_{1}^{\varphi_{1}}=\lambda_{1}g_1+m_2dk\otimes dk,
\end{equation}
where $\varphi_{1}=\psi_{1}-m_{2}k$. Thus, $(\nabla^{1}\varphi_{1}, \lambda_{1}, \mu_{1})$ defines a gradient almost $\eta$-Ricci soliton on $(M_1, g_1)$
as desired, with $\eta=dk.$
The assertion \textbf{(b)} can be obtained in a similar way. On the other hand,
for $X\in\mathfrak{L}(M_{1})$ and $U\in\mathfrak{L}(M_{2})$, we know that $g(X,U)=0.$
Thus, the assertion \textbf{(c)} follows immediately from \eqref{a4} and \eqref{e13}.
The converse is just a verification.
\end{proof}

\begin{theorem} \label{mtheo5}
Let $(M=_{{f}_{2}}\!M_1 \!\times_{{f}_{1}}\!M_2,g)$ be a doubly warped product manifold. If $(M,g,\psi)$ is a gradient Riemann soliton, then the following statements hold:\\

\item \textbf{(a)}\quad $(\nabla^{1}\varphi_{1}, \lambda_{1}, \mu_{1})$ defines a gradient almost $\eta$-Ricci soliton on $(M_1, g_1)$,\\
\\
where $\varphi_{1}=(m-2)\psi_{1}-m_{2}k,\, \lambda_{1}=f^{2}_{2}\big((m-1)\lambda+\Delta l-\Delta \psi -(m-2)g(\nabla l,\nabla\psi)\big)$,\\
\\
$\psi_{1}=\psi|_{M_{1}}$, $\eta=dk$ and $\mu_{1}=m_{2}$;\\

\item \textbf{(b)}\quad $(\nabla^{2}\varphi_{2}, \lambda_{2}, \mu_{2})$ defines a gradient almost $\eta$-Ricci soliton on $(M_2, g_2)$,\\
\\
where $\varphi_{2}=(m-2)\psi_{2}-m_{1}k,\, \lambda_{2}=f^{2}_{1}\big((m-1)\lambda+\Delta k-\Delta \psi -(m-2)g(\nabla k,\nabla\psi)\big)$,\\
\\
$\psi_{2}=\psi|_{M_{2}}$, $\eta=dl$ and $\mu_{2}=m_{1}.$
\end{theorem}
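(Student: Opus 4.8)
The plan is to reduce the gradient Riemann soliton condition to a scaled gradient Ricci soliton identity and then repeat, essentially verbatim, the computation that proved Theorem~\ref{mtheo2}. Since we are dealing with a gradient Riemann soliton we may use the contracted identity \eqref{riemannlie} (so $m=m_1+m_2\ge 3$ is assumed). Multiplying \eqref{riemannlie} by $m-2$ and using that $m-2$ is a constant, hence $(m-2)h^{\psi}=h^{(m-2)\psi}$, we rewrite it as
\begin{equation}\nonumber
\Ric+(m-2)h^{\psi}=\big((m-1)\lambda-\Delta\psi\big)g .
\end{equation}
This is formally the equation of a gradient \emph{almost} Ricci soliton with potential function $(m-2)\psi$ and soliton function $(m-1)\lambda-\Delta\psi$, so the argument of Theorem~\ref{mtheo2} applies with $\psi$ replaced by $(m-2)\psi$ and $\lambda$ by $(m-1)\lambda-\Delta\psi$.

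Concretely, for \textbf{(a)} I evaluate the displayed identity on $X,Y\in\mathfrak{L}(M_{1})$. Using \eqref{e12} for $\Ric(X,Y)$, \eqref{e4} in the form $h^{\psi}(X,Y)=h_1^{\psi_1}(X,Y)+g(X,Y)g(\nabla l,\nabla\psi)$, and $g(X,Y)=f_2^2\,g_1(X,Y)$ from \eqref{e0}, I collect all $g_1$-proportional terms into
\begin{equation}\nonumber
\lambda_1=f_2^2\big((m-1)\lambda+\Delta l-\Delta\psi-(m-2)g(\nabla l,\nabla\psi)\big),
\end{equation}
obtaining ${}^{1}\!\Ric+(m-2)h_1^{\psi_1}=\lambda_1 g_1+\frac{m_2}{f_1}h_1^{f_1}$ on $M_1$. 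Then, as in Theorem~\ref{mtheo2}, I use $\frac{1}{f_1}h_1^{f_1}=h_1^{k}+dk\otimes dk$ (recall $k=\ln f_1$), and absorb $-m_2 h_1^{k}$ into the Hessian term by linearity, $(m-2)h_1^{\psi_1}-m_2 h_1^{k}=h_1^{\varphi_1}$ with $\varphi_1=(m-2)\psi_1-m_2 k$, to reach
\begin{equation}\nonumber
{}^{1}\!\Ric+h_1^{\varphi_1}=\lambda_1 g_1+m_2\,dk\otimes dk ,
\end{equation}
which is exactly the gradient almost $\eta$-Ricci soliton condition on $(M_1,g_1)$ with $\eta=dk$ and $\mu_1=m_2$. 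Statement \textbf{(b)} follows symmetrically by evaluating on $U,V\in\mathfrak{L}(M_{2})$, using \eqref{e14}, \eqref{e5}, $g(U,V)=f_1^2 g_2(U,V)$ and $\frac{1}{f_2}h_2^{f_2}=h_2^{l}+dl\otimes dl$ with $l=\ln f_2$; here the potential becomes $\varphi_2=(m-2)\psi_2-m_1 l$, the soliton function $\lambda_2=f_1^2\big((m-1)\lambda+\Delta k-\Delta\psi-(m-2)g(\nabla k,\nabla\psi)\big)$, and $\eta=dl$, $\mu_2=m_1$.

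There is no serious obstacle: every ingredient is already available in Lemma~\ref{L2} and in \eqref{e4}--\eqref{e5}, and the contracted equation \eqref{riemannlie} was derived in the excerpt directly from the soliton equation. The only points demanding care are the bookkeeping of the $\Delta\psi$ and $g(\nabla l,\nabla\psi)$ terms so that the $\lambda_i$ emerge in the stated normalization, and the observation that scaling the potential by the constant $m-2$ is harmless. (Note that, in contrast with Theorems~\ref{mtheo1} and \ref{mtheo2}, only the two factor conditions are recorded here; evaluating the contracted equation on a mixed pair $X\in\mathfrak{L}(M_1)$, $U\in\mathfrak{L}(M_2)$ via \eqref{e13} and \eqref{a4} would yield an additional compatibility relation, which is why the theorem is phrased as an implication rather than an equivalence.)
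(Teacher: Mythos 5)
Your proof is correct and follows essentially the same route as the paper: pass to the contracted identity \eqref{riemannlie}, restrict to each factor using \eqref{e12}/\eqref{e14} and \eqref{e4}/\eqref{e5}, apply the identity $\frac{1}{f_1}h_1^{f_1}=h_1^{k}+dk\otimes dk$, and absorb the Hessian terms into $\varphi_i$. Your writing $\varphi_2=(m-2)\psi_2-m_1 l$ (rather than $-m_1 k$) also silently corrects an evident typo in the theorem's statement, and your closing remark on the mixed-direction relation is a sensible, if optional, addition.
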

\begin{proof} Let $(M=_{{f}_{2}}\!M_1 \!\times_{{f}_{1}}\!M_2,g)$ be a gradient Riemann soliton with the potential function $\psi$.
Then, we have
\begin{equation}\nonumber
^{1}\!\Ric+(m-2)h^{\psi}=\left((m-1)\lambda-\Delta \psi\right) g,
\end{equation}
from \eqref{riemannlie}. Hence, using \eqref{e0} and \eqref{e12}, we obtain
\begin{equation}\nonumber
^{1}\!\Ric+(m-2)h_{1}^{\psi_{1}}=\lambda_{1}g_{1}+\frac{m_{2}}{f_{1}}h_{1}^{f_{1}}
\end{equation}
on $M_{1}$, where $\lambda_{1}=f^{2}_{2}\big((m-1)\lambda+\Delta l-\Delta \psi -(m-2)g(\nabla l,\nabla\psi)\big)$
and $\psi_{1}=\psi|_{M_{1}}.$ After some computations, we get
\begin{equation}\nonumber
^{1}\!\Ric+h_{1}^{\varphi_{1}}=\lambda_{1}g_{1}+m_{2}dk\otimes dk,
\end{equation}
where $\varphi_{1}=(m-2)\psi_{1}-m_{2}k.$ Thus, $(\nabla^{1}\varphi_{1}, \lambda_{1}, \mu_{1})$ defines a
gradient almost $\eta$-Ricci soliton on $(M_1, g_1)$ as desired, with $\eta=dk.$
The other assertion can be obtained in a similar way.
\end{proof}
\begin{theorem}\label{mtheo3}
Let $(M=_{{f}_{2}}\!M_1\!\times_{{f}_{1}}\!M_2,g)$ be a doubly warped product quasi-Einstein manifold with associated scalar functions $\alpha_0$ and $\beta_0$. Then \\
\item \textbf{(a)} \quad $(\nabla^{1}f_{1}, \lambda_{1}, \mu_1)$ defines a gradient $f$-almost $\eta$-Ricci soliton on $(M_1, g_1)$,\\
\\
where $f=-\frac{m_2}{f_1},\, \lambda_{1}=f^{2}_{2}(\alpha_0+\Delta l)$, $\eta=\tilde{A_{1}}$ and $\mu_{1}=\beta_0$;\\
\\
\item \textbf{(b)}\quad $(\nabla^{2}f_{2}, \lambda_{2}, \mu_2)$ defines a gradient $f$-almost $\eta$-Ricci soliton on $(M_2, g_2)$,\\
\\
where $f=-\frac{m_1}{f_2},\, \lambda_{2}=f^{2}_{1}(\alpha_0+\Delta k)$, $\eta=\tilde{A_{2}}$ and $\mu_{2}=\beta_0,$\\
\\
and $\tilde{A_{i}}=A|_{M_i},$ for $i\in\{1,2\}$.
\end{theorem}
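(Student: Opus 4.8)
The plan is to read both assertions directly off the component formulas for the Ricci tensor of a doubly warped product collected in Lemma~\ref{L2}, in the same spirit as the proofs of Theorems~\ref{mtheo2} and~\ref{mtheo5}. Suppose $(M={}_{f_2}\!M_1\times_{f_1}\!M_2,g)$ is quasi-Einstein, i.e.\ $\Ric=\alpha_0 g+\beta_0 A\otimes A$. First I would evaluate this identity on a pair $X,Y\in\mathfrak{L}(M_1)$: substituting \eqref{e12} on the left and using $g(X,Y)=f_2^2 g_1(X,Y)$ from \eqref{e0}, one obtains
\[
{}^{1}\!\Ric(X,Y)-\frac{m_2}{f_1}h_1^{f_1}(X,Y)-f_2^2\,\Delta l\; g_1(X,Y)=\alpha_0 f_2^2 g_1(X,Y)+\beta_0 A(X)A(Y).
\]
Since $A(X)=\tilde{A_1}(X)$ for $X\in\mathfrak{L}(M_1)$, a rearrangement gives
\[
-\frac{m_2}{f_1}\,h_1^{f_1}+{}^{1}\!\Ric=\lambda_1 g_1+\mu_1\,\tilde{A_1}\otimes\tilde{A_1},\qquad \lambda_1=f_2^2(\alpha_0+\Delta l),\ \ \mu_1=\beta_0,
\]
which is precisely the defining equation $f\,h_1^{\psi}+{}^{1}\!\Ric=\lambda_1 g_1+\mu_1\,\eta\otimes\eta$ of a gradient $f$-almost $\eta$-Ricci soliton on $(M_1,g_1)$ with $f=-\tfrac{m_2}{f_1}$, potential function $\psi=f_1$ (hence $h_1^{\psi}=h_1^{f_1}$ and $\nabla^1\psi=\nabla^1 f_1$) and $\eta=\tilde{A_1}$. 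This is assertion~\textbf{(a)}; assertion~\textbf{(b)} follows identically by testing the quasi-Einstein identity on $U,V\in\mathfrak{L}(M_2)$ and using \eqref{e14} together with $g(U,V)=f_1^2 g_2(U,V)$.

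I do not expect any genuine computational obstacle: each step is a one-line consequence of Lemma~\ref{L2}. The only delicate point is the interpretation of the conclusion, and this is where I would be careful. A priori $\lambda_i$ and $\mu_i=\beta_0$ are functions on the whole product $M$, not on the factor $M_i$ (for instance $\lambda_1=f_2^2(\alpha_0+\Delta l)$ involves $f_2$ and, through \eqref{b2}, also $f_1$). So the statement should be understood either after restricting to a fixed leaf $M_1\times\{p_2\}$ (resp.\ $\{p_1\}\times M_2$), on which these expressions become honest smooth functions of the factor variable, or under the implicit assumption that $\alpha_0$ and $\beta_0$ are pulled back from the factors; I would add a short remark to this effect.

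For completeness I would also record the mixed-component consequence of the hypothesis: testing $\Ric=\alpha_0 g+\beta_0 A\otimes A$ on $X\in\mathfrak{L}(M_1)$, $U\in\mathfrak{L}(M_2)$ and using \eqref{e13} with $g(X,U)=0$ forces $(m_1+m_2-2)X(k)U(l)=\beta_0 A(X)A(U)$. This constraint is automatically satisfied whenever $M$ is quasi-Einstein and is not needed for \textbf{(a)} or \textbf{(b)}, but it makes explicit that the quasi-Einstein structure ties the generator $\xi$ to the warping functions $f_1,f_2$.
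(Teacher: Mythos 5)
Your proposal is correct and follows essentially the same route as the paper: evaluate the quasi-Einstein identity $\Ric=\alpha_0 g+\beta_0 A\otimes A$ on lifts from each factor, substitute \eqref{e12} (resp.\ \eqref{e14}) together with $g=f_2^2g_1$ (resp.\ $f_1^2g_2$) from \eqref{e0}, and rearrange into the defining equation $f\,h_i^{f_i}+{}^{i}\!\Ric=\lambda_i g_i+\mu_i\,\eta\otimes\eta$. Your additional remarks — that $\lambda_i$ and $\mu_i$ are a priori functions on the whole product unless one restricts to a leaf, and the mixed-component constraint from \eqref{e13} — are sound observations the paper leaves implicit, but they do not alter the argument.
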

\begin{proof} For any $X,Y\in\mathfrak{L}(M_{1})$, using \eqref{e12}, we have
\begin{equation}\nonumber
\alpha_0 g(X,Y)+\beta_0 A(X)A(Y)=\,^{1}\!\Ric(X,Y)-\frac{m_2}{f_1}h_1^{f_1}(X,Y)-g(X,Y)\Delta l,
\end{equation}
from \eqref{e20}. By using \eqref{e0}, we obtain
\begin{equation}\nonumber
^1\!\Ric(X,Y)=\,f^{2}_{2}(\alpha_0+ \Delta l)g_1(X,Y)+\beta_0 \tilde{A_{1}}(X)\tilde{A_{1}}(Y)
+\frac{m_2}{f_1}h_1^{f_1}(X,Y),
\end{equation}
which means that $(M_1, g_1)$ is a gradient $f$-almost $\eta$-Ricci soliton, as desired. The other assertion can be obtained in a similar way.
\end{proof}

\section{Concircularly Flat Doubly Warped Product Manifolds}

Let $(M, g)$ be an $m-$dimensional Riemannian manifold. The\emph{ concircular cur\-va\-ture tensor} $\mathcal{C}$ \cite{Ya} is defined by
\begin{equation}
\mathcal{C}=R-\displaystyle\frac{\tau}{m(m-1)}G,
\end{equation}
where $R$ is the Riemann curvature, $G=\frac{1}{2}(g\wedge g)$ and $\wedge$ is the Kulkarni-Nomizu product. If the concircular curvature $\mathcal{C}$ vanishes identically  on $M$, then $M$ is called \emph{concircularly flat}.\\

By using Lemma \ref{L1}, we obtain the components of the concircular curvature tensor on a doubly warped product as follows.
\begin{lemma} \label{L3}
Let $(M=_{{f}_{2}}\! M_1\!\times_{{f}_{1}}\!M_2,g)$ be a doubly warped product manifold. Then, for any $X,Y, Z\in\mathfrak{L}(M_{1})$ and $U,V,W\in\mathfrak{L}(M_{2}),$
the components of the concircular curvature tensor $\mathcal{C}$ of $M$ are given by
\begin{align}
\mathcal{C}_{XY}Z&=^{1}\!R_{XY} Z+g(X,Z)H^l(Y)-g(Y,Z)H^l(X)\nonumber\\
\quad&\quad-\displaystyle\frac{f^{2}_{2}\tau}{2m(m-1)}\big(g_{1}(Y, Z)X-g_{1}(X, Z)Y\big),\label{e23}\\
\mathcal{C}_{XY}U&=U(l)\bigg(Y(k)X-X(k)Y\bigg),\label{e24}
\end{align}
\begin{align}
\mathcal{C}_{UV}X&=X(k)\bigg(V(l)U-U(l)V\bigg),\label{e25}\\
\mathcal{C}_{XU}Y&=\bigg(h_1^k(X,Y)+X(k)Y(k)\bigg)U+Y(k)U(l)X\nonumber\\
\quad&\quad+g(X,Y)\bigg(H^l(U)+U(l)\nabla l\bigg)+\displaystyle\frac{f^{2}_{2}\tau}{2m(m-1)}g_{1}(X, Y)U,\label{e26}\\
\mathcal{C}_{UX}V&=\bigg(h_2^l(U,V)+U(l)V(l)\bigg)X+V(l)X(k)U\nonumber\\
\quad&\quad+g(U,V)\bigg(H^k(X)+X(k)\nabla k\bigg)+\displaystyle\frac{f^{2}_{1}\tau}{2m(m-1)}g_{2}(U, V)X,\label{e27}\\
\mathcal{C}_{UV}W&=^{2}\!R_{UV}W+g(U,W)H^k(V)-g(V,W)H^k(U)\nonumber\\
\quad&\quad-\displaystyle\frac{f^{2}_{1}\tau}{2m(m-1)}\big(g_{2}(V, W)U-g_{2}(U, W)V\big).\label{e28}
\end{align}
\end{lemma}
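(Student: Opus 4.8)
The plan is to read off each component of $\mathcal{C}$ by subtracting the algebraic term $\frac{\tau}{m(m-1)}G$ from the corresponding curvature operator already computed in Lemma \ref{L1}. First I would record the $(1,3)$-form of $G=\tfrac12(g\wedge g)$, namely $G_{E_1E_2}E_3=\tfrac12\big(g(E_2,E_3)E_1-g(E_1,E_3)E_2\big)$ for vector fields $E_1,E_2,E_3$ on $M$, so that
\[
\mathcal{C}_{E_1E_2}E_3=R_{E_1E_2}E_3-\frac{\tau}{2m(m-1)}\big(g(E_2,E_3)E_1-g(E_1,E_3)E_2\big).
\]
The whole computation then reduces to evaluating the two pairings $g(E_2,E_3)$ and $g(E_1,E_3)$ on lifts from the two factors and substituting \eqref{e6}--\eqref{e11}.

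The simplification comes from \eqref{e0}: $g(X,U)=0$ whenever $X\in\mathfrak{L}(M_1)$ and $U\in\mathfrak{L}(M_2)$, while $g(X,Y)=f_2^2\,g_1(X,Y)$ for $X,Y\in\mathfrak{L}(M_1)$ and $g(U,V)=f_1^2\,g_2(U,V)$ for $U,V\in\mathfrak{L}(M_2)$. Hence for arguments all tangent to $M_1$ the correction term is $\tfrac{f_2^2\tau}{2m(m-1)}\big(g_1(Y,Z)X-g_1(X,Z)Y\big)$, and combining it with \eqref{e6} gives \eqref{e23}; the mirror computation with \eqref{e11} gives \eqref{e28}. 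For the mixed operators $\mathcal{C}_{XY}U$ and $\mathcal{C}_{UV}X$ both pairings straddle the two factors, so the correction vanishes identically and one is left with \eqref{e7} and \eqref{e8}, i.e.\ \eqref{e24} and \eqref{e25}. For $\mathcal{C}_{XU}Y$ with $X,Y\in\mathfrak{L}(M_1)$ and $U\in\mathfrak{L}(M_2)$, the pairing $g(U,Y)$ vanishes while $g(X,Y)=f_2^2 g_1(X,Y)$, so the correction equals $+\tfrac{f_2^2\tau}{2m(m-1)}g_1(X,Y)U$, and adding \eqref{e9} yields \eqref{e26}; symmetrically, \eqref{e10} together with the correction $+\tfrac{f_1^2\tau}{2m(m-1)}g_2(U,V)X$ yields \eqref{e27}.

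There is no genuine obstacle: once the $(1,3)$-form of $G$ is in hand the argument is purely mechanical, and the remaining orderings of the arguments follow from these six components by the antisymmetry $\mathcal{C}_{E_1E_2}=-\mathcal{C}_{E_2E_1}$, which $\mathcal{C}$ inherits from $R$ and $G$. The only thing to watch is the bookkeeping of the scalar coefficient: the factor $\tfrac12$ from $G=\tfrac12(g\wedge g)$ and the warping factor $f_i^2$ that appears when $g$ is replaced by $g_i$ on the $i$-th factor must combine to the stated coefficient $\dfrac{f_i^2\tau}{2m(m-1)}$ in \eqref{e23}, \eqref{e26}, \eqref{e27} and \eqref{e28}.
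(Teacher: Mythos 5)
Your proposal is correct and follows essentially the same route as the paper, which obtains Lemma \ref{L3} by directly substituting the curvature components \eqref{e6}--\eqref{e11} of Lemma \ref{L1} into $\mathcal{C}=R-\frac{\tau}{m(m-1)}G$ and using the block structure of $g$ from \eqref{e0} (so that $g(X,U)=0$, $g(X,Y)=f_2^2g_1(X,Y)$, $g(U,V)=f_1^2g_2(U,V)$). Your bookkeeping of the $(1,3)$-form of $G$ with the factor $\tfrac12$ reproduces exactly the stated coefficients $\frac{f_i^2\tau}{2m(m-1)}$, so nothing is missing.
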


\begin{theorem}\label{mtheo4}
Let $(M=_{{f}_{2}}\!M_1\!\times_{{f}_{1}}\!M_2,g)$ be a doubly warped product manifold with $m_1,m_2>1$.
If $M$ is concircularly flat, then \\
\item \textbf{(a)} \quad $(M_1,g_1)$ is Einstein  with $^1\!\Ric=\mu_{1}g_{1},$\\
where $\mu_{1}=f^{2}_{2}(1-m_{1})\bigg(g(\nabla l, \nabla l)+\displaystyle\frac{\tau}{2m(m-1)}\bigg)$;\\

\item \textbf{(b)} \quad $(M_2,g_2)$ is Einstein  with $^2\!\Ric=\mu_{2}g_{2},$\\
where $\mu_{2}=f^{2}_{1}(1-m_{2})\bigg(g(\nabla k, \nabla k)+\displaystyle\frac{\tau}{2m(m-1)}\bigg)$;\\

\item \textbf{(c)} \quad $M$ is a warped product manifold of the form $M_1\!\times_{{f}_{1}} M_2$ or $G_{XY}\nabla k=0$; \\

\item \textbf{(d)} \quad $M$ is a warped product manifold of the form $_{{f}_{2}}M_1\!\times M_2$ or $G_{UV}\nabla l=0$.
\end{theorem}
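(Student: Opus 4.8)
The plan is to read the four conclusions directly off Lemma~\ref{L3}. Concircular flatness means $\mathcal{C}\equiv 0$, so each of the components \eqref{e23}, \eqref{e24}, \eqref{e25} and \eqref{e28} vanishes identically; I will extract \textbf{(a)} from \eqref{e23}, \textbf{(b)} from \eqref{e28}, \textbf{(c)} from \eqref{e24} and \textbf{(d)} from \eqref{e25}. The mixed components \eqref{e26} and \eqref{e27} are not needed: one checks that their parts tangent to $M_1$ and to $M_2$ only reproduce the dichotomies of \textbf{(c)}--\textbf{(d)} together with relations that are automatically consistent with \textbf{(a)}--\textbf{(b)}.

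For \textbf{(a)}, set the right-hand side of \eqref{e23} equal to zero and take the component tangent to $M_1$. For $X\in\mathfrak{L}(M_1)$, equation \eqref{e2} gives $H^l(X)=\nabla_X\nabla l=g(\nabla l,\nabla l)X+X(k)\nabla l$, so the part of $H^l(X)$ tangent to $M_1$ is $g(\nabla l,\nabla l)X$ (equivalently, use \eqref{0e4}); together with $g(X,Z)=f_2^2 g_1(X,Z)$ from \eqref{e0}, the $M_1$-component of $\mathcal{C}_{XY}Z=0$ collapses to
\[
{}^{1}\!R_{XY}Z=c_1\left(g_1(Y,Z)X-g_1(X,Z)Y\right),\qquad c_1=f_2^2\left(g(\nabla l,\nabla l)+\frac{\tau}{2m(m-1)}\right),
\]
so $(M_1,g_1)$ has pointwise constant sectional curvature. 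Contracting over a $g_1$-orthonormal frame yields ${}^{1}\!\Ric=\mu_1 g_1$ with $\mu_1$ the scalar function of the statement; since ${}^{1}\!\Ric$ and $g_1$ are tensors on $M_1$, the function $\mu_1$ is automatically independent of the points of $M_2$, hence $(M_1,g_1)$ is Einstein. Part \textbf{(b)} is obtained verbatim after interchanging the two factors, starting from \eqref{e28} and using \eqref{e05} for the part of $H^k$ tangent to $M_2$.

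For \textbf{(c)}, setting \eqref{e24} equal to zero gives $U(l)\left(Y(k)X-X(k)Y\right)=0$ for all $X,Y\in\mathfrak{L}(M_1)$ and $U\in\mathfrak{L}(M_2)$. Writing $G$ as a $(1,3)$-tensor, $G_{XY}W=g(Y,W)X-g(X,W)Y$, and noting $g(Y,\nabla k)=dk(Y)=Y(k)$, this reads $U(l)\,G_{XY}\nabla k=0$. Hence either $U(l)=0$ for every $U\in\mathfrak{L}(M_2)$, i.e.\ $l=\ln f_2$ is constant, in which case (exactly as in the proof of Theorem~\ref{mth1}) $g=\tilde{g_1}\oplus f_1^2 g_2$ with $\tilde{g_1}$ a constant multiple of $g_1$, so that $M$ is the warped product $M_1\!\times_{f_1}\!M_2$; or $G_{XY}\nabla k=0$. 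Part \textbf{(d)} is the mirror argument from \eqref{e25}: it becomes $X(k)\,G_{UV}\nabla l=0$, hence either $k=\ln f_1$ is constant and $M$ is the warped product ${}_{f_2}M_1\!\times M_2$, or $G_{UV}\nabla l=0$.

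I expect the only delicate point to be the bookkeeping in \textbf{(a)} and \textbf{(b)}: one must carry the conformal factor $f_2^2$ (resp.\ $f_1^2$) consistently through the $H^l$-term (resp.\ $H^k$-term) and through the scalar-curvature correction term of $\mathcal{C}$, so that the whole $M_1$- (resp.\ $M_2$-) component collapses into a single constant-curvature expression, and then track the sign in the contraction so that exactly the stated $\mu_i$ appears. The hypothesis $m_1,m_2>1$ is used implicitly throughout: it is what makes ``$G_{XY}\nabla k=0$'' a genuine restriction (it then forces $\nabla k=0$) and what makes the constant-curvature identity in \textbf{(a)} non-vacuous. Everything else is routine substitution into Lemma~\ref{L3} and \eqref{e0}.
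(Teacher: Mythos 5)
Your route is the same as the paper's: concircular flatness is used component by component in Lemma~\ref{L3}, with (a) read off \eqref{e23}, (b) off \eqref{e28}, (c) off \eqref{e24} and (d) off \eqref{e25}; the $M_1$-part of $H^l(X)$ is taken from \eqref{0e4} (resp.\ $H^k$ from \eqref{e05}), the metric is rescaled via \eqref{e0}, one contracts to get the Einstein condition, and (c)--(d) follow from the dichotomy ``$U(l)=0$ or $G_{XY}\nabla k=0$'' plus the argument of Theorem~\ref{mth1}. Parts (c) and (d) of your write-up coincide with the paper's proof and are fine.

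The step you explicitly deferred --- ``track the sign in the contraction so that exactly the stated $\mu_i$ appears'' --- is precisely where your argument does not close. From your displayed identity ${}^{1}\!R_{XY}Z=c_1\big(g_1(Y,Z)X-g_1(X,Z)Y\big)$ with $c_1=f_2^2\big(g(\nabla l,\nabla l)+\frac{\tau}{2m(m-1)}\big)$, tracing over $X$ with the Ricci convention that makes Lemma~\ref{L2} consistent with Lemma~\ref{L1} (namely ${}^{1}\!\Ric(Y,Z)=\sum_i g_1({}^{1}\!R_{e_iY}Z,e_i)$) yields ${}^{1}\!\Ric=(m_1-1)\,c_1\,g_1$, i.e.\ the factor $(m_1-1)$, not the factor $(1-m_1)$ in the stated $\mu_1$. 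The paper arrives at $(1-m_1)$ because, when it solves $\mathcal{C}_{XY}Z=0$ for ${}^{1}\!R_{XY}Z$, it carries the remaining terms over with the same signs they have in \eqref{e23} (no sign flip), which is exactly the negative of your (correctly transposed) identity. So, as written, your final sentence of (a) --- that the contraction ``yields $\mu_1$ the scalar function of the statement'' --- does not follow from your own formula: you must either perform the contraction explicitly and accept the factor $(m_1-1)$, thereby deviating from the printed $\mu_1$, or state and consistently use the opposite trace/sign convention. The same remark applies verbatim to (b). Apart from this sign bookkeeping, your proof is the paper's proof.
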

\begin{proof} Let $(M=_{{f}_{2}}\!M_1\!\times_{{f}_{1}}\!M_2,g)$ be a concircularly flat doubly warped product manifold with $m_1,m_2>1$.
Then, for any $X,Y, Z\in\mathfrak{L}(M_{1})$, we have
\begin{equation}
\nonumber
^{1}\!R_{XY} Z=g(X,Z)H^l(Y)-g(Y,Z)H^l(X)-\displaystyle\frac{f^{2}_{2}\tau}{2m(m-1)}\big(g_{1}(Y, Z)X-g_{1}(X, Z)Y\big),
\end{equation}
from \eqref{e23}. Hence, for any $T \in\mathfrak{L}(M_{1}),$ using \eqref{e0} and \eqref{0e4}
$$g_{1}(H^l(X), T)=g(\nabla l, \nabla l)g_{1}(X, T),$$
we obtain
\begin{equation}
\nonumber
\begin{array}{c}
g_{1}(^{1}\!R_{XY}Z, T) =f^{2}_{2}g(\nabla l, \nabla l)g_{1}(X,Z)g_{1}(Y, T)
-f^{2}_{2}g(\nabla l, \nabla l)g_{1}(Y,Z)g_{1}(X, T)\\
\qquad-\displaystyle\frac{f^{2}_{2}\tau}{2m(m-1)}\big(g_{1}(Y, Z)g_{1}(X, T)-g_{1}(X, Z)g_{1}(Y, T)\big).
\end{array}
\end{equation}
By contracting this equation over $X$ and $T$, we get
\begin{equation}
\nonumber
^1\!\Ric(Y, Z)=f^{2}_{2}(1-m_{1})\bigg(g(\nabla l, \nabla l)+\displaystyle\frac{\tau}{2m(m-1)}\bigg)g_{1}(Y, Z),
\end{equation}
which proves \textbf{(a)}. The assertion \textbf{(b)} can be obtained in a similar way.
On the other hand, for any $X,Y\in\mathfrak{L}(M_{1})$ and $U\in\mathfrak{L}(M_{2})$, we have
\begin{equation}
\nonumber
U(l)\bigg(Y(k)X-X(k)Y\bigg)=0,
\end{equation}
from \eqref{e24}. It follows that $U(l)=0$, for all $U\in\mathfrak{L}(M_{2}),$ or $Y(k)X-X(k)Y=0$, for all $X,Y\in\mathfrak{L}(M_{1})$.
The last equation is equivalent to the equation $G_{XY}\nabla k=0.$ In the case of $U(l)=0$,
we see that $M$ is a warped product manifold of the form  $M_1\!\times_{{f}_{1}} M_2$
as in the proof of Theorem \ref{mth1}. The last assertion \textbf{(d)} can be obtained in a similar way.
\end{proof}

\section{Conharmonically Flat Doubly Warped Product Manifolds}

Let $(M, g)$ be an $m-$dimensional Riemannian manifold, $m\geq 3$. The \emph{con\-har\-mon\-ic curvature tensor} $\mathcal{H}$ \cite{is} is defined by
\begin{equation*}\label{e29}
\mathcal{H}_{XY}Z=R_{XY}Z-\frac{1}{m-2}\big(g(Y,Z)QX-g(X,Z)QY+\Ric(Y,Z)X-\Ric(X,Z)Y\big),
\end{equation*}
where $R$ is the Riemann curvature, $\Ric$ is the Ricci curvature and $Q$ is the Ricci operator. If the conharmonic curvature $\mathcal{H}$ vanishes identically on $M$, then $M$ is called \emph{conharmonically flat}.\\

By using Lemma \ref{L1} and \eqref{e124}--\eqref{e144}, we obtain the following components of the conharmonic curvature tensor on the factors of a doubly warped product.
\begin{lemma} \label{L4}
Let $(M=_{{f}_{2}}\!M_1 \!\times_{{f}_{1}}\!M_2,g)$ be a doubly warped product manifold. Then,
we have
\begin{align}
\mathcal{H}_{XY}Z&=^{1}\!R_{XY} Z+g(X,Z)H^l(Y)-g(Y,Z)H^l(X) \nonumber\\
&\quad-\displaystyle\frac{1}{m-2}\Big(\frac{1}{f_2^2}g(Y,Z)\Big(^1\!QX-\frac{m_2}{f_1}H^{f_1}(X)-f_2^2 \Delta l \cdot X\Big)  \nonumber\\
&\quad-\frac{1}{f_2^2}g(X,Z)\Big(^1\!QY-\frac{m_2}{f_1}H^{f_1}(Y)-f_2^2 \Delta l \cdot Y\Big) \nonumber \\
&\quad+^1\!\Ric(Y,Z)X-\frac{m_2}{f_1}h_1^{f_1}(Y,Z)X-\Delta l g(Y,Z)X   \nonumber\\
&\quad-^1\!\Ric(X,Z)Y+\frac{m_2}{f_1}h_1^{f_1}(X,Z)Y+\Delta l g(X,Z)Y\Big), \label{e234}
\end{align}
\begin{align}
\mathcal{H}_{UV}W&=^{2}\!R_{UV} W+g(U,W)H^k(V)-g(V,W)H^k(U) \nonumber\\
&\quad-\displaystyle\frac{1}{m-2}\Big(\frac{1}{f_1^2}g(V,W)\Big(^2\!QU-\frac{m_1}{f_2}H^{f_2}(U)-f_1^2 \Delta k \cdot U\Big) \nonumber\\
&\quad-\frac{1}{f_1^2}g(U,W)\Big(^2\!QV-\frac{m_1}{f_2}H^{f_2}(V)-f_1^2\Delta k \cdot V\Big) \nonumber \\
&\quad+^2\!\Ric(V,W)U-\frac{m_1}{f_2}h_2^{f_2}(V,W)U-\Delta k g(V,W)U \nonumber \\
&\quad-^2\!\Ric(U,W)V+\frac{m_1}{f_2}h_2^{f_2}(U,W)V+\Delta k g(U,W)V\Big). \label{e284}
\end{align}
\end{lemma}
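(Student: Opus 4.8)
The plan is to obtain both formulas by a direct substitution, feeding the doubly-warped expressions for the Riemann curvature, the Ricci tensor and the Ricci operator — already recorded in Lemmas \ref{L1}, \ref{L2} and \ref{L5} — into the definition of the conharmonic curvature tensor $\mathcal{H}$, and then collecting terms.

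First I would fix $X,Y,Z\in\mathfrak{L}(M_{1})$. Since all three arguments lie tangent to the same factor, only $R_{XY}Z$, the Ricci operator applied to $X$ and to $Y$, and the values $\Ric(Y,Z)$, $\Ric(X,Z)$ enter $\mathcal{H}_{XY}Z$; none of the mixed quantities of Lemmas \ref{L1} and \ref{L2} are needed. I would then substitute: \eqref{e6} for $R_{XY}Z$, which contributes the first line of \eqref{e234}; \eqref{e124} for $QX$ and $QY$, which contributes the two blocks with prefactor $\frac{1}{f_{2}^{2}}$ acting on $\big(^{1}\!QX-\frac{m_{2}}{f_{1}}H^{f_{1}}(X)-f_{2}^{2}\Delta l\cdot X\big)$ and on its $Y$-analogue, with opposite signs forced by the antisymmetry $X\leftrightarrow Y$; and \eqref{e12} for $\Ric(Y,Z)$ and $\Ric(X,Z)$, which contributes the remaining terms $\pm\big(^{1}\!\Ric(\cdot,Z)-\frac{m_{2}}{f_{1}}h_{1}^{f_{1}}(\cdot,Z)-\Delta l\,g(\cdot,Z)\big)$ multiplying the complementary vector. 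Collecting all the correction terms under the common factor $-\frac{1}{m-2}$ then produces \eqref{e234} verbatim.

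For \eqref{e284} I would repeat the argument with $U,V,W\in\mathfrak{L}(M_{2})$, using \eqref{e11} for $^{2}\!R_{UV}W$, \eqref{e144} for $QU$ and $QV$, and \eqref{e14} for $\Ric(V,W)$ and $\Ric(U,W)$; this is the previous computation under the exchange $1\leftrightarrow 2$, $k\leftrightarrow l$, $f_{1}\leftrightarrow f_{2}$, $m_{1}\leftrightarrow m_{2}$.

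There is no genuine conceptual obstacle — the geometric content is already isolated in Lemmas \ref{L1}, \ref{L2} and \ref{L5}, and what remains is bookkeeping. The points where I would be most careful are the sign pattern imposed by the antisymmetry in the first two slots of $\mathcal{H}$, and the fact that the scalar $\Delta l$ (resp. $\Delta k$) occurs twice — once inside the Ricci-operator block via \eqref{e124} and once inside the Ricci-tensor block via \eqref{e12} — so these contributions must be carried separately rather than cancelled, exactly as they are displayed in \eqref{e234}--\eqref{e284}. As a final consistency check I would recompute a single component of \eqref{e234} (for instance with $Z$ fixed and $X$, $Y$ chosen from an adapted frame) straight from Lemmas \ref{L1}, \ref{L2} and \ref{L5} to confirm the signs.
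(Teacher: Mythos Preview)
Your proposal is correct and is exactly the approach the paper takes: the paper simply states that the formulas follow by substituting the expressions from Lemma~\ref{L1} and equations \eqref{e124}--\eqref{e144} (together with the Ricci formulas of Lemma~\ref{L2}) into the definition of $\mathcal{H}$, without writing out the bookkeeping. Your plan to handle \eqref{e284} by the evident $1\leftrightarrow 2$ symmetry is likewise what the paper implicitly does.
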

\begin{theorem}
Let $(M=_{{f}_{2}}\!M_1 \!\times_{{f}_{1}}\!M_2,g)$ be a doubly warped product manifold with $m_1,m_2>1$.
If $M$ is conharmonically flat, then \\

\item \textbf{(a)} \quad $(\nabla^{1}f_{1}, \lambda_{1})$ defines a gradient $f$-almost Ricci soliton on $(M_1, g_1)$,\\
\\
where $\lambda_{1}=\frac{f^{2}_{2}}{m-m_1}\big(\frac{^1\!\tau}{f_2^2}-\frac{m_2}{f_1f_2^2}\Delta_1f_1+(m_1-1)\big((m-2)g(\nabla l,\nabla l)-2\Delta l\big)\big)$
and $f=-\frac{m_2(1-(m_1-1)f_2^2)}{(m-m_1)f_1f_2^2}$;\\

\item \textbf{(b)}\quad $(\nabla^{2}f_{2}, \lambda_{2})$ defines a gradient $f$-almost Ricci soliton on $(M_2, g_2)$,\\
where $\lambda_{2}=\frac{f^{2}_{1}}{m-m_2}\big(\frac{^2\!\tau}{f_1^2}-\frac{m_1}{f_1^2f_2}\Delta_2f_2+(m_2-1)\big((m-2)g(\nabla k,\nabla k)-2\Delta k\big)\big)$
and $f=-\frac{m_1(1-(m_2-1)f_1^2)}{(m-m_2)f_1^2f_2}$.
\end{theorem}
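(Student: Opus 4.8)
The plan is to imitate, on the conharmonic side, the argument used for the concircularly flat case in Theorem~\ref{mtheo4}: isolate the purely $M_{1}$-directional part of the flatness condition supplied by Lemma~\ref{L4}, pair it with a test vector tangent to the first factor, and then trace over $M_{1}$ to single out ${}^{1}\!\Ric$. I will describe \textbf{(a)}; assertion \textbf{(b)} is obtained by the symmetric computation, starting from \eqref{e284} and using \eqref{e14}, \eqref{e144} in place of \eqref{e12}, \eqref{e124}, and tracing over $M_{2}$.

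First I would write conharmonic flatness as $\mathcal{H}_{XY}Z=0$ for all $X,Y,Z\in\mathfrak{L}(M_{1})$ and substitute it into the expression \eqref{e234} of Lemma~\ref{L4}. Pairing this vector identity with an arbitrary $T\in\mathfrak{L}(M_{1})$ with respect to $g_{1}$, I would convert every $g(\cdot,\cdot)$ on $M_{1}$-directions into $f_{2}^{2}g_{1}(\cdot,\cdot)$ via \eqref{e0}, replace $g_{1}(H^{l}(X),T)$ by $g(\nabla l,\nabla l)\,g_{1}(X,T)$ via \eqref{0e4}, and use \eqref{e12}, \eqref{e124} to recognize the contributions of ${}^{1}\!QX$ and $H^{f_{1}}(X)$ paired with $T$ as ${}^{1}\!\Ric(X,T)$ and $h_{1}^{f_{1}}(X,T)$. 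This turns \eqref{e234} into a scalar identity in ${}^{1}\!R$, ${}^{1}\!\Ric$, $h_{1}^{f_{1}}$, $g_{1}$ and the functions $\Delta l$, $g(\nabla l,\nabla l)$, ${}^{1}\!\tau$, $\Delta_{1}f_{1}$.

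The decisive step is to contract this identity over $X$ and $T$ along a $g_{1}$-orthonormal basis of $M_{1}$; by Remark~\ref{re1} such a basis is an $f_{2}$-rescaling of a $g$-orthonormal frame tangent to the first factor, so the trace is legitimate. Under it, the curvature term gives $\sum_{i}g_{1}({}^{1}\!R_{X_{i}Y}Z,X_{i})={}^{1}\!\Ric(Y,Z)$, the trace of the Ricci operator gives ${}^{1}\!\tau$, the trace of the Hessian operator of $f_{1}$ gives $\Delta_{1}f_{1}$, and each term that is linear in $X$ or in $Y$ collapses to a multiple of ${}^{1}\!\Ric(Y,Z)$, $h_{1}^{f_{1}}(Y,Z)$ or $g_{1}(Y,Z)$. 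Collecting coefficients, all the ${}^{1}\!\Ric(Y,Z)$ terms combine into $(m-m_{1}){}^{1}\!\Ric(Y,Z)$, and since $m-m_{1}=m_{2}>1$ I may divide through by this factor. Regrouping the $h_{1}^{f_{1}}$-coefficient and expressing the $g_{1}$-coefficient through ${}^{1}\!\tau$, $\Delta_{1}f_{1}$, $\Delta l$ and $g(\nabla l,\nabla l)$, the identity becomes $f\,h_{1}^{f_{1}}+{}^{1}\!\Ric=\lambda_{1}g_{1}$ with precisely the $f$ and $\lambda_{1}$ listed in the statement, i.e. $(\nabla^{1}f_{1},\lambda_{1})$ defines a gradient $f$-almost Ricci soliton on $(M_{1},g_{1})$.

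The main obstacle I anticipate is purely computational bookkeeping: tracking the conformal factors $f_{1}^{2}$, $f_{2}^{2}$ each time one passes between $g$ and a factor metric, and evaluating the $g_{1}$-traces of $H^{f_{1}}$ and ${}^{1}\!Q$ while staying consistent with the Hessian and Laplacian conventions fixed earlier. Once these are handled exactly as in the proof of Theorem~\ref{mtheo4}, the remaining algebra is routine, and \textbf{(b)} follows by the same steps with the indices $1$ and $2$ interchanged.
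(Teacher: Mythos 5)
Your proposal matches the paper's own proof essentially step for step: impose $\mathcal{H}_{XY}Z=0$ in \eqref{e234}, pair with $T\in\mathfrak{L}(M_{1})$ using \eqref{e0} and \eqref{0e4}, contract over $X$ and $T$, and read off the resulting identity as $f\,h_{1}^{f_{1}}+{}^{1}\!\Ric=\lambda_{1}g_{1}$, with \textbf{(b)} by the symmetric computation. Aside from routine bookkeeping of the exact coefficient collecting in front of ${}^{1}\!\Ric$, this is the same argument, so the proposal is correct.
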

\begin{proof} Let $(M=_{{f}_{2}}\!M_1 \!\times_{{f}_{1}}\!M_2,g)$ be a conharmonically flat doubly warped product manifold with $m_1,m_2>1$.
Then, for any $X,Y, Z\in\mathfrak{L}(M_{1})$, we have
\begin{align}
&^{1}\!R_{XY} Z+g(X,Z)H^l(Y)-g(Y,Z)H^l(X)\nonumber\\
&-\displaystyle\frac{1}{m-2}\Big(\frac{1}{f_2^2}g(Y,Z)\Big(^1\!QX-\frac{m_2}{f_1}H^{f_1}(X)-f_2^2 \Delta l \cdot X\Big)\nonumber\\
&-\frac{1}{f_2^2}g(X,Z)\Big(^1\!QY-\frac{m_2}{f_1}H^{f_1}(Y)-f_2^2 \Delta l \cdot Y\Big) \nonumber\\
&+^1\!\Ric(Y,Z)X-\frac{m_2}{f_1}h_1^{f_1}(Y,Z)X-\Delta l g(Y,Z)X \nonumber \\
&-^1\!\Ric(X,Z)Y+\frac{m_2}{f_1}h_1^{f_1}(X,Z)Y+\Delta l g(X,Z)Y\Big)=0, \nonumber
\end{align}
from \eqref{e234}. Hence, for any $T \in\mathfrak{L}(M_{1}),$ using \eqref{e0} and \eqref{0e4}
$$g_{1}(H^l(X), T)=g(\nabla l, \nabla l)g_{1}(X, T),$$
we obtain
\begin{align}
&g_{1}(^{1}\!R_{XY}Z, T) =-g(X,Z)g_1(H^l(Y),T)+g(Y,Z)g_1(H^l(X),T)\nonumber\\
&\quad+\displaystyle\frac{1}{m-2}\Big(\frac{1}{f_2^2}g(Y,Z)\Big(g_1(^1\!QX,T)-\frac{m_2}{f_1}g_1(H^{f_1}(X),T)-f_2^2 \Delta l g_1(X,T)\Big)\nonumber\\
&\quad-\frac{1}{f_2^2}g(X,Z)\Big(g_1(^1\!QY,T)-\frac{m_2}{f_1}g_1(H^{f_1}(Y),T)-f_2^2 \Delta l g_1(Y,T)\Big) \nonumber\\
&\quad+^1\!\Ric(Y,Z)g_1(X,T)-\frac{m_2}{f_1}h_1^{f_1}(Y,Z)g_1(X,T)-\Delta l g(Y,Z)g_1(X,T) \nonumber \\
&\quad-^1\!\Ric(X,Z)g_1(Y,T)+\frac{m_2}{f_1}h_1^{f_1}(X,Z)g_1(Y,T)+\Delta l g(X,Z)g_1(Y,T)\Big). \nonumber
\end{align}
By contracting this equation over $X$ and $T$, we get
\begin{align}
&^1\!\Ric(Y, Z)=\frac{f^{2}_{2}}{m-m_1}\Big(\frac{^1\!\tau}{f_2^2}-\frac{m_2}{f_1f_2^2}\Delta_1f_1\nonumber\\
&\quad +(m_1-1)\big((m-2)g(\nabla l,\nabla l)-2\Delta l\big)\Big)g_{1}(Y, Z)
+\frac{m_2(1-(m_1-1)f_2^2)}{(m-m_1)f_1f_2^2}h_1^{f_1},\nonumber
\end{align}
which proves \textbf{(a)}. The assertion \textbf{(b)} can be obtained in a similar way.
\end{proof}

\section*{Acknowledgment}
The second author was supported by 1001-Scientific and Technological Research Projects Funding Program of The Scientific and Technological Research Council of Turkey (TUBITAK) with project number 119F179.

\vspace{10pt}
{\em Adara M. Blaga}, West University of Timi\c{s}oara, Department of Mathematics, Bld. V. P\^{a}rvan, 300223, Timi\c{s}oara, Rom\^{a}nia,
e-mail: \texttt{adarablaga@yahoo.com}

\vspace{8pt}
{\em Hakan M. Ta\c{s}tan}, \.{I}stanbul University, Faculty of Science, Department of Mathematics, Vezneciler, 34134, \.{I}stanbul, Turkey,
e-mail: \texttt{hakmete@istanbul.edu.tr}

\end{document}